\newcommand{\otb}{{\overline{\otimes}}}
\newcommand{\otk}{{\otimes_{\ku}}}
\newcommand{\Mo}{{\mathcal M}}
\newcommand{\Mor}{{\mathcal M}^{\text{rev}}}
\newcommand{\No}{{\mathcal N}}
\newcommand{\Ss}{{\mathcal S}}
\newcommand{\ot}{{\otimes}}
\newcommand{\Ze}{{\mathcal Z}}
\newcommand{\Ac}{{\mathcal A}}
\newcommand{\ca}{{\mathcal C}}
\newcommand{\Do}{{\mathcal D}}
\newcommand{\Fc}{{\mathcal F}}
\newcommand{\Gc}{{\mathcal G}}
\newcommand{\op}{\rm{op}}
\newcommand{\cop}{\rm{cop}}
\newcommand{\diag}{\,\rm{diag}}
\newcommand{\ku}{{\Bbbk}}
\newcommand{\uno}{{\mathbf 1}}
\newcommand{\id}{\mbox{\rm id\,}}
\newcommand{\invm}{\mbox{\rm InvMod\,}}
\newcommand{\brp}{\mbox{\rm BrPic\,}}
\newcommand{\Mod}{\mbox{\rm Mod\,}}
\newcommand{\Fun}{\operatorname{Fun}}
\newcommand{\bim}{\mbox{\rm Bimod\,}}
\newcommand\Rep{\operatorname{Rep}}
\newcommand\co{\operatorname{co}}
\newcommand\Hom{\operatorname{Hom}}
\newcommand{\End}{\operatorname{End}}
\renewcommand{\_}[1]{\mbox{$_{\left( #1 \right)}$}}
\theoremstyle{plain}
\numberwithin{equation}{section}
\newtheorem{teo}{Theorem}[section]
\newtheorem{lema}[teo]{Lemma}
\newtheorem{cor}[teo]{Corollary}
\newtheorem{prop}[teo]{Proposition}
\newtheorem{claim}{Claim}[section]
\theoremstyle{definition}
\newtheorem{defi}[teo]{Definition}
\theoremstyle{remark}
\newtheorem{rmk}[teo]{Remark}
\def\pf{\begin{proof}}
\def\epf{\end{proof}}
\theoremstyle{remark}
\begin{document}

\title[Tensor product of bimodule categories over Hopf algebras ]{On the tensor product of bimodule categories over  Hopf algebras}
\author[ Mombelli]{
Mart\'\i n Mombelli }
\thanks{The work  was  supported by
 CONICET, Secyt (UNC), Mincyt (C\'ordoba) Argentina}
\address{ Facultad de Matem\'atica, Astronom\'\i a y F\'\i sica, \newline
\indent Universidad Nacional de C\'ordoba, \newline \indent Medina Allende s/n,
(5000) Ciudad Universitaria, \newline \indent C\'ordoba, Argentina}
\email{martin10090@gmail.com, mombelli@mate.uncor.edu
\newline \indent\emph{URL:}\/ http://www.mate.uncor.edu/$\sim$mombelli}

\begin{abstract} Let $H$ be a finite-dimensional Hopf algebra. We give a description
of the tensor product of bimodule categories over $\Rep(H)$.  When the bimodule categories
 are invertible this description can be given explicitly. We present some
consequences of this description in the case $H$ is a pointed Hopf algebra.

\bigbreak
\bigbreak
\bigbreak
\bigbreak
{\em Mathematics Subject Classification (2010): 18D10, 16W30, 19D23.}

{\em Keywords: Brauer-Picard group, tensor category, module category.}
\end{abstract}

\date{\today}
\maketitle

\section*{Introduction}
The \emph{Brauer-Picard groupoid} of finite tensor categories, introduced and studied in \cite{ENO}, is the 3-groupoid whose objects are
finite tensor categories, a 1-morphism between two tensor categories $\ca_1, \ca_2$ are
invertible $(\ca_1, \ca_2)$-bimodule categories, 2-morphisms are equivalences of such bimodule
categories and 3-morphisms are isomorphisms of such equivalences. Given a tensor category $\ca$ the \emph{Brauer-Picard group} of $\ca$, denoted by $\text{BrPic}(\ca)$, is the group of equivalence classes of invertible
$\ca$-bimodule categories. 

The Brauer-Picard group of a tensor category  has been used  to classify its extensions
 by a finite group \cite{ENO}. Also it has a close relation to certain  structures
appearing in mathematical physics, see \cite{KK}. In the work  \cite{ENO} the authors compute the Brauer-Picard group of 
categories $Vect_G$ of finite-dimensional $G$-graded vector spaces, where $G$ is an
Abelian group.

\medbreak

It is natural to pursue the  computation of the Brauer-Picard group of
the tensor category of representations of an arbitrary finite-dimensional Hopf algebra $H$.
To compute $\text{BrPic}(\Rep(H))$ one has to be able to give an explicit description of
tensor product of two $\Rep(H)$-bimodule categories. 

It is well-known that any
indecomposable exact $\Rep(H)$-bimodule category is equivalent to ${}_K\Mo$, the category
of finite-dimensional left $K$-modules, where $K$ is a right $H\otk H^{\cop}$-simple left 
$H\otk H^{\cop}$-comodule algebra. If $S$ is another such $H\otk H^{\cop}$-comodule algebra
one could ask about the decomposition of ${}_S\Mo \boxtimes_{\Rep(H)}{}_K\Mo$ in indecomposable
$\Rep(H)$-bimodule categories. For group algebras of finite Abelian groups this
decomposition was explicitly given in \cite{ENO}, but for arbitrary Hopf algebras this problem seems
more complicated. However, if both bimodule categories ${}_K\Mo$, ${}_S\Mo$
are invertible then ${}_S\Mo \boxtimes_{\Rep(H)}{}_K\Mo$ is indecomposable and, under some
additional assumptions, it is equivalent
to ${}_{S\Box_H K}\Mo$. We present some consequences of this result
that will be useful to compute the Brauer-Picard group for pointed Hopf algebras
over an Abelian group.
\medbreak

The contents of the paper are the following. Section \ref{section-rep} is dedicated to  recall necessary definitions
 and facts on representations of tensor categories. 
 In Section \ref{tens-p} we study the tensor product of bimodule categories
over the category $\Rep(H)$, where $H$ is a finite-dimensional Hopf algebra and in Section \ref{tp-q} we restrict to the case when
$H$ is quasi-triangular, allowing us to give another proof of  \cite[Corollary 8.10]{Gr}
concerning about the fusion rules of module categories over a finite group.

\section{Preliminaries and notation}

Throughout the paper $\ku$ will denote an algebraically closed field of characteristic zero. All vector spaces  will be
 considered  over $\ku$. For any Abelian category $\Ac$ we shall denote by $\Ac^{\op}$ the
\emph{opposite Abelian category}, that is objects are the same but arrows are reversed. If $A$ is an algebra
we shall denote by ${}_A\Mo$ the category of finite-dimensional left $A$-modules.

\medbreak

If $H$ is a Hopf algebra we shall denote by $\Ss_H$ its antipode. If  $K, S$ are left $H$-comodule algebras with coaction given by $\lambda_K, \lambda_S $
we shall denote by ${}_K^H\Mo_S$ the category of $(K,S)$-bimodules $V$ equipped with a left
$H$-coaction $\delta:V\to H\ot V$ such that $\delta$ is a morphism of $(K,S)$-bimodules, that is
$$ (k \cdot v\cdot s)\_{-1}\ot (k \cdot v\cdot s)\_0= k\_{-1}v\_{-1} s\_{-1}\ot k\_0
 \cdot v\_0\cdot s\_0,$$
for all $s\in S, k\in K, v\in V$.

\medbreak

If $H$ is a Hopf algebra, $H_0$ is the coradical.  If $(K, \lambda)$ is a left $H$-comodule
algebra and $H_0$ is a Hopf subalgebra, $K_0= \lambda^{-1}(H_0\otk K)$ is  a left
 $H_0$-comodule algebra.

\subsection{Tensor categories} A \emph{ tensor category over} $\ku$ is a $\ku$-linear Abelian rigid monoidal category.
Hereafter all tensor categories will be assumed to be over a field $\ku$. A
\emph{finite tensor category} \cite{eo} is a tensor category such that it has only a finite number of isomorphism
classes of simple objects, Hom spaces are finite-dimensional
$\ku$-vector spaces, all objects have finite lenght, every simple object has a projective cover and
the unit object is simple. All  functors will be assumed to be
$\ku$-linear.

\subsection{Quasi-triangular Hopf algebras}

Let $H$ be a Hopf algebra. A \emph{quasi-triangular structure} on $H$ is an  invertible 
element $R\in H\otk H$ such that 
 
\begin{equation}\label{qt1} (\Delta\ot \id)(R)=R_{13}R_{23}, \quad (\id\ot \Delta)(R)=R_{13}R_{12},
\end{equation}
\begin{equation} \label{qt2} R^1h\_1\ot R^2 h\_2 =h\_2 R^1\ot h\_1 R^2, \text{ for all } h\in H.
\end{equation}

It is well known that $(\Ss_H\ot\id)(R)=R^{-1}=(\id\ot\Ss_H)(R)$. For the inverse of $R$ we shall use the notation
$R^{-1}=R^{-1}\ot R^{-2}$.

If $(H,R)$ is a  quasi-triangular Hopf algebra the category $\Rep(H)$ is braided with braiding
given by $c_{X,Y}:X\otk Y\to Y\otk X$, $c_{X,Y}(x\ot y)= R^2\cdot y \ot R^1\cdot x$ for all $X,Y\in \Rep(H)$,
 $x\in X, y\in Y$. The inverse of $c$ is given by $c^{-1}_{X,Y}(x\ot y)= R^{-1}\cdot y \ot R^{-2}\cdot x$ for all $X,Y\in \Rep(H)$,
 $x\in X, y\in Y$

\section{Representations of tensor categories}\label{section-rep}
Let $\ca$ be a tensor category. A \emph{left representation} of $\ca$, or a \emph{left module category} over $\ca$ is an
 Abelian category $\Mo$ equipped with an exact
bifunctor $\otb: \ca \times \Mo \to \Mo$, that we will sometimes refer as the \emph{action},  natural associativity
and unit isomorphisms $m_{X,Y,M}: (X\otimes Y)\otb M \to X\otimes
(Y\otb M)$, $\ell_M: \uno \otb M\to M$ such that
\begin{equation}\label{left-modulecat1} m_{X, Y, Z\otimes M}\; m_{X\otimes Y, Z,
M}= (\id_{X}\otimes m_{Y,Z, M})\;  m_{X, Y\otimes Z, M}\;  (a_{X,
Y, Z}\otimes \id_{M}),
\end{equation}
\begin{equation}\label{left-modulecat2} (\id_{X}\otimes l_M)m_{X,{\bf
1} ,M}=r_X\otimes \id_{M}.
\end{equation}

A left module category $\Mo$ is \emph{exact} \cite{eo},
  if for any projective object
$P\in \ca$ the object $P\otb M$ is projective in $\Mo$ for all
$M\in\Mo$.  A \emph{right module category }over $\ca$
 is an Abelian category $\Mo$ equipped with an exact
bifunctor $\otb:  \Mo\times  \ca\to \Mo$ equipped with  isomorphisms 
$\widetilde{m}_{M, X,Y}: M\otb (X\ot Y)\to (M\otb X) \otb Y$, $r_M:M\otb \uno\to M$ such that
\begin{equation}\label{right-modulecat1} \widetilde{m}_{M\otb X, Y ,Z }\; \widetilde{m}_{M,X ,Y\ot Z } (\id_M \otb a_{X,Y,Z})=
(\widetilde{m}_{M,X , Y}\ot \id_Z)\, \widetilde{m}_{M,X\ot Y ,Z },
\end{equation}
\begin{equation}\label{right-modulecat2} (r_M\ot \id_X)  \widetilde{m}_{M,\uno, X}= \id_M\ot l_X.
\end{equation}

 A $(\ca, \ca')-$\emph{bimodule category}  is an Abelian category $\Mo$  with left $\ca$-module category
 and right $\ca'$-module category  structure
together with natural
isomorphisms $\{\gamma_{X,M,Y}:(X\otb M) \otb Y:\to X\otb (M\otb Y), X\in\ca, Y\in\ca', M\in \Mo\}$ satisfying 
certain axioms. For details the reader is referred to \cite[Prop. 2.12]{Gr}. A $(\ca, \ca')$-bimodule category
is the same as left $\ca\boxtimes \ca'^{\op}$-module category. Here $\boxtimes$ denotes Deligne's
tensor product of Abelian categories \cite{De}. For a bimodule category $\Mo$
we shall denote by   
$$\{m^l_{X,Y,M}:(X\otimes Y)\otb M \to X\otimes
(Y\otb M: X, Y\in \ca, M\in \Mo\}\, \text{  and }$$
$$ \{m^r_{M, X,Y}: M\otb (X\ot Y)\to (M\otb X) \otb Y:X, Y\in \ca, M\in \Mo\}$$
the left and right associativity isomorphisms respectively.
\medbreak

If $\Mo$ is a   right $\ca$-module category then $\Mo^{\op}$ denotes the
 opposite  Abelian category with left $\ca$ action $\ca\times\Mo^{\op} \to \Mo^{\op}$, $(M, X)\mapsto  M\otb X^*$ and associativity
 isomorphisms $m^{\op}_{X,Y,M}=m^{-1}_{Y^*, X^*, M}$ for all $X, Y\in \ca, M\in \Mo$. Similarly if  $\Mo$ is a
   left $\ca$-module category. If  $\Mo$ is a $(\ca,\Do)$-bimodule category then $\Mo^{\op}$ is a 
$(\Do,\ca)$-bimodule category. See \cite[Prop. 2.15]{Gr}.

\medbreak

A module functor between left $\ca$-module categories $\Mo$ and $\Mo'$ over a
tensor category $\ca$ is a pair $(T,c)$, where $T:\Mo \to
\Mo'$ is a  functor and $c_{X,M}: T(X\otb M)\to
X\otb T(M)$ is a family of  natural isomorphism such that for any $X, Y\in
\ca$, $M\in \Mo$:
\begin{align}\label{modfunctor1}
(\id_X\otimes c_{Y,M})c_{X,Y\otb M}T(m_{X,Y,M}) &=
m_{X,Y,T(M)}\, c_{X\otimes Y,M}
\\\label{modfunctor2}
\ell_{T(M)} \,c_{\uno ,M} &=T(\ell_{M}).
\end{align}
We shall denote this functor by $(T, c): \Mo \to \Mo'$. Sometimes we shall denote the family
of isomorphisms $c^T$ to emphasize the fact that they are related to the functor $T$.

\medbreak  Let $\Mo_1$ and $\Mo_2$ be left $\ca$-module categories. 
The category whose
objects are module functors $(\Fc, c):\Mo_1\to\Mo_2$ will be denoted by $\Fun_{\ca}(\Mo_1, \Mo_2)$. A
morphism between  $(\Fc,c)$ and $(\Gc,d)\in\Fun_{\ca}(\Mo_1,
\Mo_2)$ is a natural transformation $\alpha: \Fc \to \Gc$ such
that for any $X\in \ca$, $M\in \Mo_1$:
\begin{gather}
\label{modfunctor3} d_{X,M}\alpha_{X\otb M} =(\id_{X}\otb \alpha_{M})c_{X,M}.
\end{gather}

  Two module categories $\Mo_1$ and $\Mo_2$ over $\ca$ are {\em equivalent} if there exist module functors $F:\Mo_1\to
\Mo_2$ and $G:\Mo_2\to \Mo_1$ and natural isomorphisms
$\id_{\Mo_1} \to F\circ G$, $\id_{\Mo_2} \to G\circ F$ that
satisfy \eqref{modfunctor3}.

\medbreak

The  direct sum of two module categories $\Mo_1$ and $\Mo_2$ over
a tensor category $\ca$  is the $\ku$-linear category $\Mo_1\times
\Mo_2$ with coordinate-wise module structure. A module category is
{\em indecomposable} if it is not equivalent to a direct sum of
two non trivial module categories. Any exact module category is 
equivalent to a direct sum of indecomposable exact module categories, see \cite{eo}.

If $\Mo,  \Mo'$ are right $\ca$-modules, a module functor from $\Mo$ to $  \Mo'$ is a pair $(T, d)$ where
$T:\Mo \to \Mo'$ is a  functor and $d_{M,X}:T(M\otb X)\to T(M)\otb X$ is a family of isomorphisms
such that for any $X, Y\in
\ca$, $M\in \Mo$:
\begin{align}\label{modfunctor11}
( d_{M,X}\ot \id_Y)d_{M\otb X, Y}T(m_{M, X, Y}) &=
m_{T(M), X,Y}\, d_{M, X\ot Y},
\\\label{modfunctor21}
\ell_{T(M)} \,c_{\uno ,M} &=T(\ell_{M}).
\end{align}
If $\Mo,  \Mo'$ are $(\ca, \Do)$-bimodule categories, a \emph{bimodule functor}
is the same as a module functor of $\ca \boxtimes \Do^{\op}$-module categories,
that is a  functor $F:\Mo\to \Mo'$ such that  $(F,c):\Mo\to \Mo'$ is a functor of left
 $\ca$-module categories,  also $(F,d):\Mo\to \Mo'$ is a functor of right
$\Do$-module categories and
\begin{equation}\label{bimod-funct-def}
(\id_X\ot d_{M,Y}) c_{X,M\otb_r Y} F(\gamma_{X,M,Y})=
\gamma_{X,F(M),Y} (c_{X,M}\ot \id_Y) d_{X\otb_l M, Y},
\end{equation}
for all $M\in  \Mo$, $X\in\ca$, $Y\in \Do$.

\subsection{Tensor product of bimodule categories}\label{t-modcat}

Let  $\ca, \ca', \mathcal{E}, \mathcal{E}'$ be tensor categories. If $\Mo$ is a $(\ca,\mathcal{E})$-bimodule
 category and $\No$ is an $(\mathcal{E},\ca')$-bimodule
category,  the tensor product over $\mathcal{E}$ is denoted by $\Mo\boxtimes_\mathcal{E}\No$. 
This category is a $(\ca,\ca')$-bimodule category. For more details on the tensor product of module categories 
the reader is referred to \cite{ENO}, \cite{Gr}.

\medbreak

If $\Mo$ is a $(\ca, \mathcal{E})$-bimodule category and $\No$ is a $(\ca, \mathcal{E}')$-bimodule category
 then the category $\Fun_\ca(\Mo, \No)$ has a structure of $(\mathcal{E}, \mathcal{E}')$-bimodule category, 
see \cite[Prop. 3.18]{Gr}. Let us briefly describe both
 structures. Let us denote
$$\otb^l: \mathcal{E} \times \Fun_\ca(\Mo, \No)\to \Fun_\ca(\Mo, \No),
\otb^r:\mathcal{E}' \times\Fun_\ca(\Mo, \No)\to \Fun_\ca(\Mo, \No)$$
the left and right actions. If $X\in \mathcal{E}$,  $Y\in  \mathcal{E}'$, $F\in \Fun_\ca(\Mo, \No)$ and $M\in \Mo$, then 
$$(X \otb^l F)(M)=F(M \otb X), \quad (F\otb^r Y)(M)=F(M)\otb Y$$
 The module structures are the following. Let $X, X'\in \mathcal{E}$, $M\in \Mo$ and let  $c^F_{X',M}:F(X'\otb M )\to X'\otb F(M)$
 be the module functor structure of $F$. Then  $c^{X\otb^l F}_{X',M}:(X\otb F)(X'\otb M )\to X'\otb (X\otb F)(M)$
 is defined as the composition
$$F((X'\otb M)\otb X) \xrightarrow{F(\gamma_{X',M,X })} F( X'\otb (M\otb X)) \xrightarrow{c^F_{X',M\otb X}}
 X'\otb F(M\otb X) .  $$
The associativity $m^l_{X,X',F}: (X\ot X')\otb^l F\to X\otb^l (X'\otb^l F)$ is the natural isomorphism
$$ F(M\otb (X\ot X')) \xrightarrow{ F(m^r_{M,X,X'})}  F((M\otb X)\otb X'),$$
for any $X, X'\in \mathcal{E}$, $M\in \Mo$. Also the map
 $$c^{ F\otb^r Y}_{X,M}:( F\otb^r Y)(X\otb M)
\to X\otb ( F\otb^r Y)(M)$$ is defined by the composition
$$F(X\otb  M)\otb Y \xrightarrow{c^F_{X,M}\ot\id_Y} (X\otb F(M))\otb Y \xrightarrow{\gamma_{X,F(M),Y}}
X\otb \big( F(M)\otb Y \big).$$

\begin{prop}\label{funct-mod}\cite[Thm. 3.20]{Gr} If $\Mo$ is a  $(\mathcal{E},\ca)$-bimodule and $\No$ is a  $(\ca, \mathcal{E}')$-bimodule then there is a canonical
 equivalence  of $(\mathcal{E},\mathcal{E}')$-bimodule categories:
\begin{equation}\label{equival-tens-prod-of-modcat}\Mo^{\op}\boxtimes_\ca\No \simeq \Fun_\ca(\Mo, \No).\qed
\end{equation}
\end{prop}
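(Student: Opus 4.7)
The plan rests on the universal property of the relative tensor product $\boxtimes_{\ca}$: $\Mo^{\op}\boxtimes_{\ca}\No$ represents the $2$-functor assigning to an Abelian category $\Pc$ the category of $\ca$-balanced right exact bilinear functors $\Mo^{\op}\times\No\to\Pc$, with the natural bimodule enrichment. It thus suffices to construct a bilinear functor $\Phi:\Mo^{\op}\times\No\to\Fun_{\ca}(\Mo,\No)$ with the required $\ca$-balancing and $(\mathcal{E},\mathcal{E}')$-equivariance, and then to verify that the induced bimodule functor $\widetilde{\Phi}:\Mo^{\op}\boxtimes_{\ca}\No\to\Fun_{\ca}(\Mo,\No)$ is an equivalence.

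The natural candidate is $\Phi(M,N)(M'):=\uhom_{\Mo}(M,M')\otb N$, where $\uhom_{\Mo}(M,-):\Mo\to\ca$ is the internal Hom for the $\ca$-action on $\Mo$, determined by the adjunction $\Hom_{\Mo}(M\otb X,M')\cong\Hom_{\ca}(X,\uhom_{\Mo}(M,M'))$; its existence is guaranteed for Abelian module categories over the rigid category $\ca$. The $\ca$-module functor structure on $\Phi(M,N)$ together with its $\ca$-balancing reduces to standard manipulations of $\uhom_{\Mo}$, combined with rigidity of $\ca$ and the left $\ca$-action on $\No$, while the left $\mathcal{E}$- and right $\mathcal{E}'$-equivariance follow directly from the $(\mathcal{E},\ca)$- and $(\ca,\mathcal{E}')$-bimodule coherences on $\Mo$ and $\No$. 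Checking the requisite pentagon axioms is routine diagram chasing.

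The main obstacle is to verify that $\widetilde{\Phi}$ is an equivalence. My approach would be to reduce to a concrete algebraic model: by the structure theorem for exact module categories, one can realize $\Mo$ and $\No$ as categories of modules over internal algebras $A$ and $B$ in $\ca$; then $\Fun_{\ca}(\Mo,\No)$ is equivalent to the category of $(B,A)$-bimodules in $\ca$, and $\Mo^{\op}\boxtimes_{\ca}\No$ admits the same description through a two-sided bar construction. Tracing $\widetilde{\Phi}$ through these identifications shows it sends the universal generator $M\boxtimes N$ to the corresponding internal bimodule; fully faithfulness on such generators then follows from the defining adjunction of $\uhom_{\Mo}$, and essential surjectivity because both sides are generated under cokernels by these elementary objects. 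The most technically delicate point is tracking the coherence data (balancing pentagon, bimodule associators, and their compatibility with $\uhom_{\Mo}$) throughout this reduction, which is precisely the verification carried out in \cite[Thm.~3.20]{Gr}.
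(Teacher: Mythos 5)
You should note first that the paper contains no proof of this proposition: it is quoted from \cite[Thm. 3.20]{Gr} and closed immediately, so the only available comparison is with Greenough's own argument, which works directly with the universal property of the balanced tensor product and its realization by balanced (suitably exact) functor categories, not by passing to algebras internal to $\ca$. The first half of your plan is the standard alternative route: the assignment $\Phi(M,N)(M')=\uhom_{\Mo}(M,M')\otb N$ is the right candidate, and its module-functor structure, $\ca$-balancing and $(\mathcal{E},\mathcal{E}')$-equivariance are indeed routine consequences of rigidity and the internal-Hom adjunction (after straightening the left/right conventions, about which the statement itself is loose).

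The genuine gap lies in the step you describe as ``$\Mo^{\op}\boxtimes_{\ca}\No$ admits the same description through a two-sided bar construction.'' Once $\Fun_\ca(\Mo,\No)$ is identified with the category ${}_A\Bimod_B(\ca)$ of $(A,B)$-bimodules in $\ca$ (writing $\Mo\simeq\Mod_A(\ca)$, $\No\simeq\Mod_B(\ca)$), the proposition \emph{is} the assertion that $\Mo^{\op}\boxtimes_\ca\No$ admits that same description; so this step carries essentially the whole content of the theorem, and you give no argument for it. There is moreover a circularity risk: in the sources this paper relies on, the relative tensor product is either constructed via, or characterized by, precisely such functor-category/bimodule models, so you must fix a construction of $\boxtimes_\ca$ and then verify its universal property for ${}_A\Bimod_B(\ca)$ honestly --- exhibit the canonical balanced functor $\Mod_A(\ca)^{\op}\times\Mod_B(\ca)\to{}_A\Bimod_B(\ca)$ and prove that every right exact $\ca$-balanced functor factors through it uniquely, e.g.\ by a coequalizer (Beck-type) argument using the free--forgetful adjunction. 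Two hypotheses also enter silently: the structure theorem and the identification of $\Fun_\ca(\Mo,\No)$ with bimodules require exactness of the module categories, or at least a restriction to right exact module functors, which is absent from the statement as given (though satisfied in the paper's applications); and before $\widetilde{\Phi}$ is even induced you must check the exactness of $\uhom_{\Mo}$ in each variable that the universal property demands, which again is automatic only in the exact case.
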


\subsection{The  center of a bimodule category}\label{the-center-bimodcat} The following definition was given in \cite{GNN}. 

\begin{defi}  If $\Mo$ is 
a $\ca$-bimodule category the\emph{ center of }$\Mo$ is the category $\Ze_\ca(\Mo)$
whose objects are pairs $(M, \phi^M)$ where $M\in  \Mo$ and $\{\phi^M_X:X\otb M\to M\otb X: X\in \ca\}$ is a family
of natural isomorphisms such that
\begin{equation}\label{center-modcat} m^r_{M,X,Y} \phi^M_{X\ot Y}= (\phi^M_X\ot\id_Y)\, \gamma^{-1}_{X,M,Y}\,
(\id_X\ot \phi^M_Y) \,m^l_{X,Y,M},
\end{equation}
for all $X, Y\in \ca$, $M\in\Mo$. A morphism between two objects $(M, \phi^M)$, $(N, \phi^N)$ in $\Ze_\ca(\Mo)$ is a morphism
$f:M\to N$ in $\Mo$ such that $(f\ot\id_X) \phi^M_X=\phi^N_X (\id_X\ot f)$ for all $X\in \ca$.
 
\end{defi}

\begin{lema}\cite[Lemma 7.8]{Gr} If $\Mo$ is a $\ca$-bimodule category
 the center $\Ze_\ca(\Mo)$ is a $\Ze(\ca)$-bimodule category .
\end{lema}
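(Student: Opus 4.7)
The plan is to equip $\Ze_\ca(\Mo)$ with left and right $\Ze(\ca)$-actions induced from the $\ca$-bimodule action on $\Mo$, and then verify the center and bimodule axioms. On underlying objects I would set
\[
(X,\sigma^X)\otb(M,\phi^M):=(X\otb M,\,\phi^{X\otb M}),\qquad (M,\phi^M)\otb(X,\sigma^X):=(M\otb X,\,\phi^{M\otb X}),
\]
where the half-braiding $\phi^{X\otb M}_Y:Y\otb(X\otb M)\to(X\otb M)\otb Y$ is the composite
\begin{multline*}
Y\otb(X\otb M)\xrightarrow{(m^l)^{-1}}(Y\ot X)\otb M\xrightarrow{(\sigma^X_Y)^{-1}\otb\id_M}(X\ot Y)\otb M\\
\xrightarrow{m^l}X\otb(Y\otb M)\xrightarrow{\id_X\otb\phi^M_Y}X\otb(M\otb Y)\xrightarrow{\gamma^{-1}_{X,M,Y}}(X\otb M)\otb Y,
\end{multline*}
and $\phi^{M\otb X}_Y$ is given by the analogous composite built from $\gamma^{-1}_{Y,M,X}$, $\phi^M_Y\otb\id_X$, $\gamma_{M,Y,X}$, $\id_M\otb(\sigma^X_Y)^{-1}$ and $m^r_{M,X,Y}$. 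On morphisms the two actions are inherited from $\Mo$, namely $\id_X\otb f$ and $f\otb\id_X$, and the associativity and unit constraints for the bimodule structure on $\Ze_\ca(\Mo)$ are taken to be those of $\Mo$.

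The core step is to verify that $\phi^{X\otb M}$ and $\phi^{M\otb X}$ satisfy the center axiom \eqref{center-modcat}. This is a substantial diagram chase combining three ingredients: the hexagon axiom for the half-braiding $\sigma^X$ in $\Ze(\ca)$, which controls the interaction of $\sigma^X$ with tensor products $Y\ot Y'$ in $\ca$; the pentagon-type coherences relating $m^l$, $m^r$ and $\gamma$ from the definition of a $\ca$-bimodule category \cite[Prop.\ 2.12]{Gr}; and the center axiom for $(M,\phi^M)$ itself. I expect this verification to be the main obstacle, though the difficulty is computational rather than conceptual.

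The remaining items are formal. Compatibility of the two actions with morphisms of $\Ze_\ca(\Mo)$ is immediate from naturality of the associators and of $\phi$. The inherited associativity and unit isomorphisms are themselves morphisms in $\Ze_\ca(\Mo)$ for the same reason, and the pentagon and triangle axioms for $\Ze_\ca(\Mo)$ as a $\Ze(\ca)$-bimodule category reduce to their counterparts in $\Mo$. The bimodule compatibility between the left and right $\Ze(\ca)$-actions follows similarly from the corresponding compatibility $\gamma$ in $\Mo$. Finally, exactness of the two actions is inherited from exactness of the $\ca$-actions on $\Mo$, since the forgetful functor $\Ze_\ca(\Mo)\to\Mo$ creates kernels and cokernels.
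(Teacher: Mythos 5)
Your construction is essentially the paper's: the paper likewise only records the induced actions $(X,c_X)\otb_l(M,\phi^M)=(X\otb M,\phi^{X\otb M})$ and $(M,\phi^M)\otb_r(X,c_X)=(M\otb X,\phi^{M\otb X})$, with exactly your composites \eqref{lft-bimod-act} and \eqref{rht-bimod-act} (up to the convention on which direction the half-braiding of $X$ is taken), and defers the axiom verification to \cite[Lemma 7.8]{Gr}, which is the diagram chase you outline. One small notational slip: the middle map in your right-hand composite should be $(m^r_{M,Y,X})^{-1}\colon (M\otb Y)\otb X\to M\otb(Y\ot X)$ rather than ``$\gamma_{M,Y,X}$'', since $\gamma$ takes an object of $\ca$ in its first slot.
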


Let us briefly explain the left and right actions that we shall denote them by $\otb_l$ and $\otb_r$ respectively.
For any $X\in \ca$, $M\in \Mo$ define $$(X, c_X)\otb_l (M, \phi^M)=(X\otb M,  \phi^{X\otb M})
\text{ and }$$ $$(M, \phi^M)\otb_r  (X, c_X)= (M\otb X, \phi^{M\otb X})$$ where
\begin{equation}\label{lft-bimod-act} \phi^{X\otb M}_Y=\gamma^{-1}_{X,M,Y} (\id_X\ot \phi^M_Y) m^l_{X,Y,M}
(c_{YX}\ot \id_M)  ({m^l_{Y,X,M}})^{-1},
\end{equation}
\begin{equation}\label{rht-bimod-act}  \phi^{M\otb X}_Y={m^r_{M,X,Y}}(\id_M\ot c_{YX})(m^r_{M,Y,X})^{-1}
(\phi^M_Y\ot\id_X) \gamma^{-1}_{Y,M,X},
\end{equation}
for all $Y\in\ca$.

\subsection{Module categories over Hopf algebras}  Assume that $H$ is a finite-dimensional Hopf algebra
 and let $(\Ac,\lambda)$ be a left $H$-comodule algebra.  The
category  ${}_\Ac\Mo$ is a representation
of $\Rep(H)$. The action $\otb:\Rep(H)\times {}_\Ac\Mo\to {}_\Ac\Mo$ is given by
$V\otb M=V\otk M$ for all $V\in \Rep(H)$, $M\in {}_\Ac\Mo$. The left $\Ac$-module
structure on $V\otk M$ is given by the coaction $\lambda$. When $\Ac$ is right $H$-simple, that is,
it has no non-trivial right ideal $H$-costable, then the category ${}_\Ac\Mo$ is exact. 
Reciprocally, if $\Mo$ is an exact indecomposable module category over
 $\Rep(H)$ then
there exists a left $H$-comodule algebra $\Ac$ right $H$-simple with trivial
coinvariants such that $\Mo\simeq {}_\Ac\Mo$ as $\Rep(H)$-modules, see
\cite[Theorem 3.3]{AM}.

\begin{defi}\label{op-comod} If $(\Ac,\rho)$ is a right $H$-comodule algebra then $(\Ac^{\op},\bar{\rho})$
 is a left $H$-comodule algebra,
 where $\Ac^{\op}$ denotes the opposite algebra and $\bar{\rho}:\Ac\to H\ot \Ac$ is defined by
 $\bar{\rho}(a)=\Ss_H(a\_1)\ot a\_0$, where $\rho(a)=a\_0\ot a\_1$ for all $a\in\Ac$. We shall denote this
left $H$-comodule algebra by $\bar{\Ac}$. 
\end{defi}

\begin{lema}\label{comod-opos} There is an equivalence $\big({}_\Ac\Mo\big)^{\op} \simeq {}_{\bar{\Ac}}\Mo$ as left $\Rep(H)$-modules.
\end{lema}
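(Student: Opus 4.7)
The plan is to construct an explicit covariant equivalence $F : ({}_\Ac\Mo)^{\op} \to {}_{\bar{\Ac}}\Mo$ using finite-dimensional linear duality. On objects, I would set $F(M) = M^*$: since $M$ is a finite-dimensional left $\Ac$-module, $M^*$ inherits a right $\Ac$-action $(f \cdot a)(m) = f(a m)$, and this is the same as a left $\bar{\Ac}$-action because $\bar{\Ac} = \Ac^{\op}$. On morphisms, $F$ would send $g \in \Hom_{({}_\Ac\Mo)^{\op}}(M, N) = \Hom_{{}_\Ac\Mo}(N, M)$ to its transpose $g^* : M^* \to N^*$. Because linear duality is a contravariant equivalence between finite-dimensional left and right $\Ac$-modules, $F$ is a covariant equivalence of Abelian categories.

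Next I would promote $F$ to a left $\Rep(H)$-module functor. By the recipe for opposite module categories recalled just before the statement, the left action of $V \in \Rep(H)$ on $M \in ({}_\Ac\Mo)^{\op}$ is $M \otk V^*$, where $V^*$ is a left $H$-module via $(h \cdot \xi)(v) = \xi(\Ss_H(h) v)$, and the $\Ac$-action on $M \otk V^*$ uses the right coaction $\rho$ of $\Ac$, namely $a(m \otimes \xi) = a\_0 m \otimes a\_1 \xi$. I would take
\[
c_{V, M} : F(M \otk V^*) = (M \otk V^*)^{*} \;\longrightarrow\; V \otk M^{*}
\]
to be the canonical vector space isomorphism coming from $(M \otk V^*)^{*} \cong V^{**} \otk M^{*}$ together with the standard identification $V^{**} \cong V$, implemented through the pairing $(v \otimes f)(m \otimes \xi) = f(m)\,\xi(v)$.

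The heart of the proof is verifying that $c_{V, M}$ intertwines the two $\bar{\Ac}$-actions. A short direct computation shows that, under $c_{V, M}$, the right $\Ac$-action $(\xi \cdot a)(m \otimes \eta) = \xi(a\_0 m \otimes a\_1 \eta)$ on $(M \otk V^*)^{*}$ becomes $a \cdot (v \otimes f) = \Ss_H(a\_1) v \otimes a\_0 \cdot f$ on $V \otk M^{*}$; this is exactly the $\bar{\Ac}$-action dictated by $\bar{\rho}(a) = \Ss_H(a\_1) \otimes a\_0$. The two occurrences of $\Ss_H$—one coming from the left $H$-action on $V^*$, one from the definition of $\bar{\rho}$—match tautologically, which is the whole point of Definition \ref{op-comod}. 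The coherence axioms \eqref{modfunctor1}--\eqref{modfunctor2} then follow from the naturality of dualization and the opposite-associator relation $m^{\op}_{V, W, M} = m^{-1}_{W^*, V^*, M}$. The principal obstacle is essentially bookkeeping: keeping the many dualization, opposite-algebra, and opposite-category conventions aligned. Once the formulas are written out, the required compatibilities are forced by the very definition of $\bar{\Ac}$.
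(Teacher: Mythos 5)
Your proposal is correct and follows essentially the same route as the paper: the paper also takes $F(M)=M^{*}$ with the transposed action $(a\cdot f)(m)=f(a\cdot m)$ and declares the module-functor constraints $c_{X,M}$ to be the canonical ("identity") identifications, which you merely spell out more carefully via $(M\otk X^{*})^{*}\cong X\otk M^{*}$ and the check that the two occurrences of $\Ss_H$ match. No gaps; your version just makes explicit the verification the paper leaves to the reader.
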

\pf Define $(F,c):\big({}_\Ac\Mo\big)^{\op}\to {}_{\bar{\Ac}}\Mo$ by $F(M)=M^*$ for any $M\in {}_\Ac\Mo$ .
If $f\in M^*, m\in M, a\in  \Ac$ then $(a\cdot f)(m)=f(a\cdot m)$. For any $X\in\Rep(H)$, $M\in \big({}_\Ac\Mo\big)^{\op}$
the maps $c_{X,M}:F(X\otb M)\to X\otb F(M)$ are the identities. One can easily verify that this functor defines an 
equivalence of module categories.
\epf

\begin{prop}\cite[Prop. 1.23]{AM}\label{modfunct-caseHopf}  If $\Ac$ and $\Ac'$ are right $H$-simple
 left $H$-comodule algebras, there is an equivalence  of categories
\begin{equation}\label{modfunct-hopf}
 \Fun_{Rep(H)}({}_\Ac\Mo, {}_{\Ac'}\Mo)\simeq {}_{\Ac'}^H\Mo_\Ac.\qed
\end{equation}
\end{prop}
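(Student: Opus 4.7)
The plan is to exhibit mutually quasi-inverse functors
\[
\Phi: {}_{\Ac'}^H\Mo_\Ac \longrightarrow \Fun_{\Rep(H)}({}_\Ac\Mo, {}_{\Ac'}\Mo), \qquad
\Psi: \Fun_{\Rep(H)}({}_\Ac\Mo, {}_{\Ac'}\Mo) \longrightarrow {}_{\Ac'}^H\Mo_\Ac.
\]
For $V\in {}_{\Ac'}^H\Mo_\Ac$ with coaction $\delta(v)=v\_{-1}\otimes v\_0$, set $\Phi(V)=V\otimes_\Ac(-)$, which lands in ${}_{\Ac'}\Mo$ via the left $\Ac'$-action on $V$. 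I would equip $\Phi(V)$ with the module functor structure
\[
c^V_{X,M}: V\otimes_\Ac (X\otk M) \to X\otk (V\otimes_\Ac M), \qquad v\otimes_\Ac (x\otimes m) \mapsto v\_{-1}\cdot x \otimes (v\_0\otimes_\Ac m),
\]
the key point being that the $(\Ac',\Ac)$-bimodule/coaction compatibility defining ${}_{\Ac'}^H\Mo_\Ac$ is exactly what is needed to check $\Ac$-balancedness and $\Ac'$-linearity of $c^V_{X,M}$; the module functor axioms \eqref{modfunctor1}--\eqref{modfunctor2} then follow from the coassociativity and counit axioms for $\delta$.

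For the inverse direction, given a module functor $(F,c)$ I would set $\Psi(F)=F(\Ac)$. The left $\Ac'$-action is already there; the right $\Ac$-action is defined via the module endomorphisms $F(r_a)$ coming from right multiplication $r_a:\Ac\to\Ac$ (which is left $\Ac$-linear). For the $H$-coaction, observe that the coaction $\lambda:\Ac\to H\otk\Ac$ is a morphism of left $\Ac$-modules (where $H\otk\Ac$ carries the action coming from $H\otb\Ac$ in ${}_\Ac\Mo$), so one can define
\[
\delta := c^F_{H,\Ac}\circ F(\lambda): F(\Ac)\longrightarrow H\otk F(\Ac).
\]
Coassociativity of $\delta$ follows from the pentagon-type axiom \eqref{modfunctor1} applied to $X=Y=H$ together with coassociativity of $\lambda$, and the counit axiom follows from \eqref{modfunctor2}. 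Compatibility of $\delta$ with the bimodule structure is obtained by applying naturality of $c^F$ to the module maps given by left and right multiplication on $\Ac$.

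Finally I would establish the natural isomorphisms $\Psi\Phi\simeq \id$ and $\Phi\Psi\simeq \id$. The first is immediate: $\Psi(\Phi(V))=V\otimes_\Ac\Ac\simeq V$ via $v\otimes_\Ac a\mapsto v\cdot a$, and one checks that this transports the $(\Ac',\Ac)$-bimodule and $H$-comodule structures correctly by tracing through the definitions. For the second, the map
\[
F(\Ac)\otimes_\Ac M \longrightarrow F(M), \qquad v\otimes_\Ac m \mapsto F(\mu_m)(v),
\]
where $\mu_m:\Ac\to M$ sends $a\mapsto a\cdot m$, is $\Ac$-balanced and natural in $M$; it is an isomorphism because both sides are right exact $\ku$-linear functors in $M$ agreeing on the generator $\Ac$ (where the map is the identity $F(\Ac)\to F(\Ac)$), and compatibility with the module functor structures reduces to naturality of $c^F$ applied to the maps $\mu_m$.

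The main obstacle is the verification in the second paragraph that the coaction $\delta = c^F_{H,\Ac}\circ F(\lambda)$ is coassociative and bimodule-compatible: this requires extracting the Hopf-algebraic data encoded in the natural isomorphism $c^F$ on the full category ${}_\Ac\Mo$ from its value on the free module $\Ac$, and the pentagon axiom \eqref{modfunctor1} specialized to tensor powers of $H$ is what makes this work. The remaining steps are bookkeeping with the naturality of $c^F$ and the coaction axioms for $\Ac$.
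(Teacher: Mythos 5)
Your construction is, in substance, the same as the paper's: the paper also realizes a module functor as $P\ot_\Ac(-)$ with $P\in{}_{\Ac'}\Mo_\Ac$ and endows $P$ with the coaction $p\mapsto c^F_{H,\Ac}(p\ot 1\ot 1)$, which under the identification $P\simeq F(\Ac)$ is exactly your $\delta=c^F_{H,\Ac}\circ F(\lambda)$; your verifications of coassociativity, counit and bimodule compatibility via \eqref{modfunctor1}, \eqref{modfunctor2} and naturality of $c^F$ are the standard ones and are fine, as is the quasi-inverse $\Phi(V)=V\ot_\Ac(-)$ with $c^V_{X,M}(v\ot x\ot m)=v\_{-1}\cdot x\ot v\_0\ot m$.

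The one genuine gap is the step where you claim that $F(\Ac)\ot_\Ac M\to F(M)$, $v\ot_\Ac m\mapsto F(\mu_m)(v)$, is an isomorphism ``because both sides are right exact $\ku$-linear functors agreeing on the generator $\Ac$''. Right exactness of $F(\Ac)\ot_\Ac(-)$ is clear, but right exactness of $F$ is not: a $\ku$-linear module functor between module categories need not be right exact in general (already over $\ca=\Vect$, where module functors are just $\ku$-linear functors, $\Hom_A(N,-)$ gives a counterexample), and a non--right-exact functor cannot be of tensor form, so your Eilenberg--Watts step would fail without further input. This is precisely where the hypothesis that $\Ac$ and $\Ac'$ are right $H$-simple must enter, and your argument never uses it: right $H$-simplicity makes ${}_\Ac\Mo$ and ${}_{\Ac'}\Mo$ exact module categories over $\Rep(H)$, and then the theorem of Etingof--Ostrik (cited as \cite{eo} in the paper, and invoked explicitly in the paper's sketch: ``any module functor $(F,c^F)$ is exact'') guarantees that $F$ is exact, after which your presentation argument with $\Ac^n\to\Ac^m\to M\to 0$ goes through. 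Add that citation (or an equivalent proof of exactness of $F$) and the proposal is complete.
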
 
We shall explain briefly the proof of this Proposition. 
Any module functor $(F,c^F):{}_\Ac\Mo\rightarrow {}_{\Ac'}\Mo$ is exact \cite{eo}, thus there is exists an object 
$P\in {}_{\Ac'}\Mo_\Ac$ such that $F(M)=P\ot_\Ac M$. The object $P$ has a left $H$-comodule structure given by
$$ \lambda:P\to H\otk P,  \quad \lambda(p)=c^F_{H, \Ac}(p\ot 1\ot 1),$$
for all $p\in P$.

\medbreak

For any finite-dimensional Hopf algebra $H$ we shall denote by $\diag(H)$ the left
$H\otk H^{\cop}$-comodule algebra with $H$ as the underlying algebra structure and comodule
structure:
$$\lambda:\diag(H)\to H\otk H^{\cop}\otk diag(H), \quad \lambda(h)=h\_1\ot h\_3\ot h\_2,$$
for all $h\in H$.
The category ${}_H\Mo$ is a $\Rep(H)$-bimodule category with obvious structure. The proof
of the following result is easy and omitted. 
\begin{lema}\label{diagonal-comod} There is an equivalence of $\Rep(H)$-bimodule categories
$$ {}_H\Mo\simeq {}_{\diag(H)}\Mo.$$\qed
\end{lema}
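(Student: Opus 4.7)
The plan is to exhibit an explicit bimodule equivalence. Since $\diag(H) = H$ as algebras, both ${}_H\Mo$ and ${}_{\diag(H)}\Mo$ have the same underlying category of finite-dimensional left $H$-modules. I would take $F\colon {}_H\Mo \to {}_{\diag(H)}\Mo$ to be the identity on objects and morphisms; all the content lies in producing module functor structures $c_{V,M}$ and $d_{M,W}$ compatible with the two different $\Rep(H)$-bimodule structures.

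First I would unpack the bimodule structure on ${}_{\diag(H)}\Mo$ coming from its left $\Rep(H\otk H^{\cop})\simeq \Rep(H)\boxtimes \Rep(H)^{\op}$-module category structure. Using the coaction $\lambda(h)=h_{(1)}\otk h_{(3)}\otk h_{(2)}$, the action of $V\boxtimes W$ on $M$ is $V\otk W\otk M$ with $H$-action $h\cdot (v\otk w\otk m)=h_{(1)}v\otk h_{(3)}w\otk h_{(2)}m$. Restricting to $V\boxtimes \uno$ recovers the ordinary diagonal left action on $V\otk M$, so $c_{V,M}$ may be taken to be the identity. Restricting to $\uno\boxtimes W$ yields $M\otb_r W = W\otk M$ with action $h(w\otk m)=h_{(2)}w\otk h_{(1)}m$, whereas in ${}_H\Mo$ one has $M\otb_r W = M\otk W$ with the ordinary diagonal action. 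This forces $d_{M,W}\colon M\otk W \to W\otk M$ to be the swap $m\otk w\mapsto w\otk m$, which is $H$-linear by the direct check that swap intertwines $h_{(1)}m\otk h_{(2)}w$ with $h_{(2)}w\otk h_{(1)}m$.

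Next I would verify the coherence axioms. The left module coherence \eqref{modfunctor1} is immediate since $c$ is the identity. The right module coherence \eqref{modfunctor11} reduces to a routine check using that the right associator $\widetilde{m}^{\diag}_{M,X,Y}$ on ${}_{\diag(H)}\Mo$ is the swap of the first two factors $X\otk Y\otk M\to Y\otk X\otk M$, while on ${}_H\Mo$ it is essentially the identity. For the bimodule coherence \eqref{bimod-funct-def}, both sides trace out the same map $V\otk M\otk W \to V\otk W\otk M$ swapping the last two factors, so equality is automatic. Finally $F$ is fully faithful and essentially surjective by construction, hence an equivalence of $\Rep(H)$-bimodule categories.

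The only real obstacle is bookkeeping: the tensor orderings in ${}_H\Mo$ on the one hand, and those arising from the $\Rep(H)\boxtimes \Rep(H)^{\op}$-module structure on ${}_{\diag(H)}\Mo$ on the other, differ by a transposition of factors, and the swap is precisely what the right-module structure of $F$ records. Once this is kept straight no calculation of any depth remains, which is why the authors describe the proof as easy and omit it.
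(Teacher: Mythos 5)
Your proposal is correct: the identity functor with trivial left module structure $c_{V,M}=\id$ and the flip $d_{M,W}\colon M\otk W\to W\otk M$ (which is $H$-linear precisely because the $H^{\cop}$-leg $h_{(3)}$ of the coaction $\lambda(h)=h_{(1)}\ot h_{(3)}\ot h_{(2)}$ acts on the right tensorand, giving $h_{(2)}w\ot h_{(1)}m$ on the $\diag(H)$ side) does give a bimodule equivalence, and your coherence checks against the swap-type associator and $\gamma$ on ${}_{\diag(H)}\Mo$ go through. The paper explicitly omits the proof as easy, and this bookkeeping verification is exactly the intended argument, so there is nothing to compare beyond noting agreement.
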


\section{Tensor product of bimodule categories over Hopf algebras}\label{tens-p}

Let $A, B$ be  finite-dimensional Hopf algebras. A $(\Rep(B), \Rep(A))$-bimo\-dule category is the same as a left
$\Rep(A^{\cop}\ot B)$-module category, see \cite[Prop. 5.5]{De}. Thus any exact $(\Rep(B), \Rep(A))$-bimodule category is equivalent
to the category ${}_S\Mo$ of left $S$-modules, where $S$ is a finite-dimensional  right $A^{\cop}\ot B$-simple left $A^{\cop}\ot B$-comodule
algebra, see \cite[Thm. 3.3]{AM}. The main purpose of this section is to understand the tensor product of $(\Rep(B), \Rep(A))$-bimodule categories.

\medbreak

Set $\pi_A:A\ot B\to A$, $\pi_B:A\ot B\to B$ the algebra maps
$$\pi_A(x\ot y)=\epsilon(y) x, \quad \pi_B(x\ot y)=\epsilon(x) y,$$
for all $x\in A, y\in B.$  If $S$ is a left $A^{\cop}\ot B$-comodule algebra the actions of the tensor categories $\Rep(A)$,
$\Rep(B)$ are as follows. If $M\in {}_S\Mo$, $X\in \Rep(B)$, $Y\in \Rep(A)$ then
$$ X\otb M= X\otk M, \quad M\otb Y=Y\otk M,$$
where the left action of $S$ is:
$$s\cdot (x\ot m)=\pi_B(s\_{-1})\cdot x \ot  s\_0 \cdot m, \quad s\cdot (y\ot m)=\pi_A(s\_{-1})\cdot y \ot  s\_0 \cdot m,$$
for all $s\in S, x\in X, y\in Y, m\in M$. We state the following lemma that will be useful later.
\begin{lema}\label{tech-pi} For any $h\in A\ot B$
 \begin{equation}\label{claim11} \pi_B(h\_{1})\ot  \pi_A(h\_{2})= \pi_B(h\_{2})\ot  \pi_A(h\_{1}).\qed
\end{equation}
\end{lema}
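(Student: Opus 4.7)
The plan is to verify the identity by direct computation on elementary tensors, exploiting the very rigid form of the comultiplication on $A^{\cop}\otk B$ together with the definitions of $\pi_A,\pi_B$. Since both sides of \eqref{claim11} are $\ku$-linear in $h$, I would reduce immediately to the case $h = a\ot b$ with $a\in A$, $b\in B$.

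In $A^{\cop}\otk B$ the coproduct is the tensor product of the two coalgebra structures (with the opposite one on $A$), so
\[
\Delta(a\ot b) = (a\_{2}\ot b\_{1})\ot (a\_{1}\ot b\_{2}).
\]
Applying the definitions $\pi_B(x\ot y)=\epsilon(x)y$ and $\pi_A(x\ot y)=\epsilon(y)x$ and pulling the scalars out gives
\[
\pi_B(h\_{1})\ot\pi_A(h\_{2}) = \epsilon(a\_{2})b\_{1}\ot\epsilon(b\_{2})a\_{1} = b\ot a,
\]
where the last equality uses counitality twice (first summing $\epsilon(a\_{2})a\_{1}=a$, then $\epsilon(b\_{2})b\_{1}=b$). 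An identical calculation for the right‑hand side, swapping the Sweedler indices $1\leftrightarrow 2$, yields
\[
\pi_B(h\_{2})\ot\pi_A(h\_{1}) = \epsilon(a\_{1})b\_{2}\ot\epsilon(b\_{1})a\_{2} = b\ot a,
\]
and the two expressions agree.

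The underlying reason, worth isolating in one sentence of the write‑up, is that $\pi_A$ and $\pi_B$ project onto the two tensor factors of $A^{\cop}\otk B$, so each Sweedler component of $\Delta(h)$ contributes via counitality on the other factor; the choice of which Sweedler index is labelled $(1)$ and which is $(2)$ is therefore immaterial. There is no real obstacle here — the statement is bookkeeping — so the only thing to be careful about is that the coproduct on the first tensorand is the one of $A^{\cop}$, but as the counit $\epsilon_A$ is the same for $A$ and $A^{\cop}$, this does not affect the calculation.
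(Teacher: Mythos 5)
Your verification is correct: reducing to an elementary tensor $h=a\ot b$, expanding the coproduct, and applying counitality shows both sides equal $b\ot a$, and your remark that the co-opposite structure on $A$ is irrelevant (since $\epsilon_{A^{\cop}}=\epsilon_A$) is exactly the right observation. The paper omits the proof as immediate, and your direct computation is precisely the intended argument, so there is nothing to add.
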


We shall give
to the category ${}_{K}^B\Mo_ {S}$  the following  $\Rep(A)$-bimodule category structure.
If $X, Y\in \Rep(A)$, $P\in {}_{K}^B\Mo_ {S}$ then
$$X\otb^l P= P\ot_S (X\otk S), \quad P\otb^r Y= Y\otk P.$$
Here the left $S$-module structure on $X\otk S$ is given by:
\begin{equation}\label{s-action1} s\cdot (x\ot t)=\pi_A(s\_{-1})\cdot x\ot s\_0t,
\end{equation}

for all $s,t\in S, x\in X$. The object $X\otb^l  P$ belongs to the category ${}_{K}^B\Mo_ {S}$ with the following structure:
$$ r\cdot (p\ot x\ot t)\cdot s= r\cdot p\ot x\ot ts, \quad \delta_1(p\ot x\ot s)= p\_{-1} \pi_B(s\_{-1})\ot p\_0\ot x\ot s\_0,$$
for all $x\in X$, $r\in K, s,t\in S, p\in P$. The object $P\otb^r Y$ belongs to the category ${}_{K}^B\Mo_ {S}$ with the following structure:
$$ r\cdot (y\ot p)\cdot s= \pi_A(r\_{-1})\cdot y\ot r\_0\cdot p\cdot s, \quad \delta_2(y\ot p)=p\_{-1} \ot y\ot p\_0,$$
for all $r\in K, s,t\in S, p\in P$, $y\in Y$. We shall denote the category ${}_{K}^B\Mo_ {S}$ with the above described $\Rep(A)$-bimodule category by
$\Mo(A,B,K,S)$ to emphasize the presence of this extra structure.

\begin{prop} The category $\Mo(A,B,K,S)$ is a  $\Rep(A)$-bimodule category.
\end{prop}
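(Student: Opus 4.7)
The plan is to verify three ingredients of the $\Rep(A)$-bimodule category structure in turn: the left action $\otb^l$ together with its associator $m^l$, the right action $\otb^r$ together with $m^r$, and the middle associator $\gamma_{X,P,Y}$ that bridges them. For each, the underlying $\ku$-linear bifunctor is given in the statement; the content lies in showing the output still lies in ${}^B_K\Mo_S$ and that the natural candidates for the associators descend to the relevant balanced tensor products and satisfy the required pentagons.

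I would begin with the most delicate point: showing that the coaction $\delta_1$ on $P\otimes_S(X\otk S)$ is well-defined on the balanced tensor product, i.e.\ that $\delta_1((p\cdot s')\otimes x\otimes t)=\delta_1(p\otimes(s'\cdot (x\otimes t)))$ for all $s'\in S$. Expanding both sides using \eqref{s-action1} and the bimodule coaction axiom for $P$, everything cancels except for the identity
\[
\pi_B(s'_{(-1)})\otimes \pi_A(s'_{(0)(-1)})\otimes s'_{(0)(0)} = \pi_B(s'_{(0)(-1)})\otimes \pi_A(s'_{(-1)})\otimes s'_{(0)(0)}
\]
in $B\otk A\otk S$. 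Setting $h=s'_{(-1)}$ and using coassociativity of the $A^{\cop}\otk B$-coaction on $S$, both sides are rewritten as $\pi_B(h_{(1)})\otimes \pi_A(h_{(2)})\otimes s'_{(0)}$ and $\pi_B(h_{(2)})\otimes \pi_A(h_{(1)})\otimes s'_{(0)}$ respectively, so the equality is precisely Lemma \ref{tech-pi}. Granted this, the bimodule coaction axiom for $X\otb^l P$ is a direct computation using that $\pi_B$ is a bialgebra map and that the $A^{\cop}\otk B$-coaction on $S$ is an algebra map, and coassociativity of $\delta_1$ follows from coassociativity of the coactions on $P$ and on $S$.

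Next I would define the left associator $m^l_{X,Y,P}\colon P\otimes_S(X\otk Y\otk S)\to (P\otimes_S(Y\otk S))\otimes_S(X\otk S)$ by $p\otimes x\otimes y\otimes s\mapsto (p\otimes y\otimes 1_S)\otimes(x\otimes s)$, check that it descends to the balanced tensor products (a second, simpler application of Lemma \ref{tech-pi}), and verify that it is a bimodule isomorphism, $B$-colinear, and satisfies the pentagon and unit axioms. The right action $P\otb^r Y=Y\otk P$ is substantially easier: since $\delta_2$ does not involve $Y$, the bimodule and comodule axioms reduce immediately to the corresponding axioms for $P$, and the associator $m^r$ is the obvious swap on the $\Rep(A)$-tensorands. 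Finally, the middle associator $\gamma_{X,P,Y}\colon Y\otk(P\otimes_S(X\otk S))\to (Y\otk P)\otimes_S(X\otk S)$ is the canonical rearrangement of tensor factors; it is manifestly a $(K,S)$-bimodule isomorphism intertwining $\delta_1$ and $\delta_2$, and the coherence axioms of \cite[Prop. 2.12]{Gr} reduce to equalities between obvious rearrangements.

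The hard part will be the first step: showing that $\delta_1$ descends to the balanced tensor product $P\otimes_S(X\otk S)$. It is the only place where the interaction between the $A$- and $B$-factors of $A^{\cop}\otk B$ enters essentially, and Lemma \ref{tech-pi} is precisely tailored to swap $\pi_A$ and $\pi_B$ across the two halves of $\Delta(s'_{(-1)})$. Once this well-definedness is in hand, every subsequent verification relies only on coassociativity of the coactions and the fact that $\pi_A$ and $\pi_B$ are bialgebra maps.
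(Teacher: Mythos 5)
Your proposal follows essentially the same route as the paper: the only nontrivial point is the well-definedness of $\delta_1$ on the balanced tensor product $P\ot_S(X\otk S)$, which both you and the paper reduce via coassociativity to the swap identity of Lemma \ref{tech-pi}, and the associators $m^l$, $m^r$, $\gamma$ you propose are exactly the paper's maps, with the remaining verifications being routine. No gaps; this matches the published argument.
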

\pf
The map $\delta_1:X\otb^l  P\to B\otk X\otb^l  P$ is well defined. Indeed, if $x\in X$, $ s,t\in S, p\in P$ then
\begin{align*}\delta_1(p\cdot t\ot x\ot s)&=p\_{-1} \pi_B(t\_{-1}s\_{-1})\ot p\_0\cdot t\_0\ot x\ot s\_0\\
&=p\_{-1} \pi_B(t\_{-1}s\_{-1})\ot p\_0\ot \pi_A( t\_0\_{-1})\cdot x\ot t\_0\_0s\_0\\
&=p\_{-1} \pi_B(t\_0\_{-1}s\_{-1})\ot p\_0\ot \pi_A( t\_{-1})\cdot x\ot t\_0\_0s\_0\\
&=\delta_1(p \ot t\cdot(x\ot s)).
\end{align*}
The third equality follows from \eqref{claim11}. It can be proven by a straightforward computation that both objects 
$X\otb^l  P$, $P\otb^r Y$ are in the category ${}_{K}^B\Mo_ {S}$. Let $X, Y\in \Rep(A)$, $P\in {}_{K}^B\Mo_ {S}$, define
$$m^l_{X,Y,P}:(X\otk Y)\otb^l  P\to X\otb^l  (Y\otb^l  P), $$
$$ m^r_{M,X,Y}:P\otb^r (X\otk Y)\to (P \otb^r X)\otb^r Y $$
 by
$$m^l_{X,Y,P}(p\ot x\ot y\ot s)=p\ot y\ot 1\ot x\ot s, \quad m^r_{M,X,Y}(x\ot y\ot p)=y\ot x\ot p,$$
for all $x\in X, y\in Y, p\in P, s\in S$. One can verify easily that both maps belong to the category ${}_{K}^B\Mo_ {S}$
and they satisfy axioms \eqref{left-modulecat1}, \eqref{left-modulecat2} and \eqref{right-modulecat1},
\eqref{right-modulecat2} respectively.  The maps $\gamma_{X,P,Y}:(X\otb^l P)\otb^r Y\to X\otb^l (P \otb^r Y)$, $\gamma(
y\ot p\ot x\ot s)=y\ot p\ot x\ot s$ are morphisms in the category  ${}_{K}^B\Mo_ {S}$ and they satisfy the requirements
of \cite[Prop. 2.12]{Gr}, hence $\Mo(A,B,K,S)$ is a  $\Rep(A)$-bimodule category.\epf

\begin{teo}\label{fun-equiv2} Let $K, S$ be two right $A^{\cop}\otk B$-simple left $A^{\cop}\otk B$-comodule algebras.
The equivalence \eqref{modfunct-hopf} establishes an equivalence
$$\Mo(A,B,K,S)\simeq \Fun_{\Rep(B)}({}_S\Mo, {}_{K}\Mo)$$
 of  $\Rep(A)$-bimodule categories.
\end{teo}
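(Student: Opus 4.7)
The plan is to upgrade the canonical equivalence $F:{}_K^B\Mo_S \to \Fun_{\Rep(B)}({}_S\Mo,{}_K\Mo)$ of Proposition \ref{modfunct-caseHopf}, which sends $P$ to the functor $F_P = P\otimes_S(-)$, to an equivalence of $\Rep(A)$-bimodule categories, where the source carries the structure $\Mo(A,B,K,S)$. Since the underlying equivalence of abelian categories and its inverse are already in hand, the whole task reduces to producing natural isomorphisms
\[
\mu^l_{X,P}:F(X\otb^l P)\xrightarrow{\sim} X\otb^l F(P), \qquad \mu^r_{P,Y}:F(P\otb^r Y)\xrightarrow{\sim} F(P)\otb^r Y,
\]
of $\Rep(B)$-module functors for all $X,Y\in\Rep(A)$, $P\in {}_K^B\Mo_S$, and then verifying the associativity/unit coherences together with the mixed bimodule axiom \eqref{bimod-funct-def}.

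First I would write down the candidates by chasing definitions. For the right action, $F(P\otb^r Y)(M)=(Y\otk P)\otimes_S M$ and $(F(P)\otb^r Y)(M)=Y\otk(P\otimes_S M)$; the obvious linear map $(y\ot p)\otimes_S m \mapsto y\ot(p\otimes_S m)$ is $K$-linear because the $K$-action on $P\otb^r Y$ uses $\pi_A(r\_{-1})\cdot y$, exactly matching the $\Rep(A)\to\Rep(K)$-inherited action on $Y\otk F_P(M)$. For the left action, $F(X\otb^l P)(M)=\bigl(P\otimes_S(X\otk S)\bigr)\otimes_S M$ while $(X\otb^l F(P))(M)=F_P(M\otb X)=P\otimes_S(X\otk M)$, where on the right $X\otk M$ carries the $S$-action $s\cdot(x\ot m)=\pi_A(s\_{-1})\cdot x\ot s\_0\cdot m$. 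The map $p\otimes_S(x\ot s)\otimes_S m\mapsto p\otimes_S(x\ot s\cdot m)$ is a well-defined $K$-linear isomorphism because $X\otk S$ is free as a right $S$-module on $X$, and this will be $\mu^l$.

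Next I would check that $\mu^l$ and $\mu^r$ are morphisms in $\Fun_{\Rep(B)}({}_S\Mo,{}_K\Mo)$, i.e.\ that they intertwine the $\Rep(B)$-module structures of the two functors. By the recipe following Proposition \ref{modfunct-caseHopf}, the $\Rep(B)$-module structure of $F_Q$ is encoded by the left $B$-coaction on $Q$. For $\mu^r$ the coaction $\delta_2(y\ot p)=p\_{-1}\ot y\ot p\_0$ on $Y\otk P$ coincides with that of $P$, so compatibility is immediate. For $\mu^l$ the coaction $\delta_1(p\otimes_S x\ot s)=p\_{-1}\pi_B(s\_{-1})\ot p\_0\otimes_S x\ot s\_0$ on $P\otimes_S(X\otk S)$ must, after absorbing $s$ into $M$, reproduce the coaction read off from the composite functor $X\otb^l F_P$; this is exactly where Lemma \ref{tech-pi} is decisive, since the action of $s\_{-1}$ on $M$ via $\pi_A$ and the coaction via $\pi_B$ must be commuted past each other.

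Finally I would check the associativity and unit coherences of the left and right $\Rep(A)$-actions together with \eqref{bimod-funct-def}. All reduce to comparing the two canonical ways of writing $P\otimes_S(X\otk S)\otimes_S(Y\otk M)$ against $P\otimes_S(X\otk Y\otk M)$, which are immediate once the actions are untangled, since the structure isomorphisms $m^l$, $m^r$, $\gamma$ of $\Mo(A,B,K,S)$ act as the identity (or as the obvious swap) on plain tensor factors. The main obstacle is therefore the bookkeeping in the second step above, where Lemma \ref{tech-pi} is the key technical input; everything else is formal, and Proposition \ref{modfunct-caseHopf} supplies the inverse functor automatically, yielding the claimed equivalence of $\Rep(A)$-bimodule categories.
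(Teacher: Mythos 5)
Your proposal is correct and follows essentially the same route as the paper: it takes the underlying equivalence $P\mapsto P\otimes_S(-)$ from Proposition \ref{modfunct-caseHopf} and equips it with exactly the structure maps the paper calls $c_{X,P}$ and $d_{P,Y}$ (your $\mu^l$, $\mu^r$), then verifies well-definedness, the morphism condition in $\Fun_{\Rep(B)}({}_S\Mo,{}_K\Mo)$, and the module/bimodule coherences. The paper leaves these checks as routine, while you sketch them in slightly more detail (your appeal to Lemma \ref{tech-pi} parallels its use in establishing the bimodule structure of $\Mo(A,B,K,S)$), so the two arguments agree in substance.
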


\pf Define $\Phi: \Mo(A,B,K,S) \to \Fun_{\Rep(B)}({}_S\Mo, {}_{K}\Mo)$ by
$$\Phi(P)(N)=P\ot_S N,$$
for all $P\in {}_{R}^B\Mo_S$, $N\in {}_S\Mo$. We shall define on the functor $\Phi$ structures of
left and right $\Rep(A)$-module functor. The natural isomorphisms
$c_{X,P}: \Phi(X\otb^l P)\to X\otb^l \Phi(P)$ are defined by
$$(c_{X,P})_N: \big(P\ot_S (X\otk S) \big)\ot_S N\to P\ot_S (X\otk N ),$$
$$ (c_{X,P})_N(p\ot x\ot s\ot n)=p\ot x\ot s\cdot n,$$
for all $N\in {}_S\Mo$, $p\in P$, $x\in X$, $s\in S$, $n\in N$. The natural isomorphisms 
$d_{P,Y}: \Phi(P\otb Y)\to \Phi(P)\otb Y$ is defined by
$$ (d_{P,Y})_N: (Y\otk P)\ot_S N\to Y\otk (P\ot_S N),  (d_{P,Y})_N(y\ot p\ot n)= y\ot p\ot n,$$
for all  $p\in P$, $y\in Y$, $n\in N$. It is easy to prove that the maps $c_{X,P}, d_{P,Y}$ are well-defined
and make the functor $\Phi$ a left and right $\Rep(A)$-module functor, respectively.
\epf

Using the previous Theorem, equivalence \ref{equival-tens-prod-of-modcat} and Lemma \ref{comod-opos} we obtain:
\begin{cor}\label{tens-bimod}  Let $K$ be a right $A^{\cop}\otk B$-simple left $A^{\cop}\otk B$-comodule algebra and
$L$ a right $B^{\cop}\otk A$-simple left $B^{\cop}\otk A$-comodule algebra. There is
an equivalence of  $\Rep(A)$-bimodule categories:
$$ {}_L\Mo \boxtimes_{\Rep(B)}
 {}_K\Mo \simeq  \Mo(A,B,K,\overline{L}).$$\qed
\end{cor}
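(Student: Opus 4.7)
The plan is to obtain the desired equivalence by composing three earlier results into one chain. The bimodule category ${}_L\Mo$ is $(\Rep(A),\Rep(B))$ and ${}_K\Mo$ is $(\Rep(B),\Rep(A))$, so the tensor product ${}_L\Mo \boxtimes_{\Rep(B)} {}_K\Mo$ makes sense and is a $\Rep(A)$-bimodule category, matching the right side.

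First I would apply Proposition \ref{funct-mod} with $\ca = \Rep(B)$, $\mathcal{E} = \mathcal{E}' = \Rep(A)$, taking $\Mo = ({}_L\Mo)^{\op}$ and $\No = {}_K\Mo$. Since $\Mo^{\op} \simeq {}_L\Mo$, this yields
$$ {}_L\Mo \boxtimes_{\Rep(B)} {}_K\Mo \;\simeq\; \Fun_{\Rep(B)}\bigl(({}_L\Mo)^{\op}, {}_K\Mo\bigr) $$
as $\Rep(A)$-bimodule categories. Next, Lemma \ref{comod-opos} (applied with $H = B^{\cop}\otk A$, so that ${}_L\Mo$ is a left $\Rep(H)$-module, i.e., a $(\Rep(A),\Rep(B))$-bimodule category) gives $({}_L\Mo)^{\op}\simeq {}_{\overline{L}}\Mo$, where $\overline{L}$ is viewed as a left $A^{\cop}\otk B$-comodule algebra via the isomorphism $B\otk A^{\cop}\cong A^{\cop}\otk B$ interchanging tensor factors. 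This turns the previous display into
$$ {}_L\Mo \boxtimes_{\Rep(B)} {}_K\Mo \;\simeq\; \Fun_{\Rep(B)}\bigl({}_{\overline{L}}\Mo, {}_K\Mo\bigr). $$
Finally, Theorem \ref{fun-equiv2} identifies the right-hand side with $\Mo(A,B,K,\overline{L})$ as $\Rep(A)$-bimodule categories, concluding the proof.

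The main thing to verify, rather than a deep obstacle, is that each of these three equivalences carries the claimed $\Rep(A)$-bimodule structure; this is already part of the statements being invoked. A minor bookkeeping point is to check that the left $B^{\cop}\otk A$-comodule structure on $L$ gives, via Definition \ref{op-comod} (used on the corresponding right $B\otk A^{\cop}$-comodule algebra obtained by swapping legs), a legitimate left $A^{\cop}\otk B$-comodule algebra structure on $\overline{L}$ so that the symbol $\Mo(A,B,K,\overline{L})$ is defined in the sense of Theorem \ref{fun-equiv2}. Once this is in place, the corollary follows by simple composition of the three equivalences.
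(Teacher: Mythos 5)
Your proposal is correct and matches the paper's own argument: the paper derives the corollary exactly by composing Theorem \ref{fun-equiv2}, the equivalence \eqref{equival-tens-prod-of-modcat} of Proposition \ref{funct-mod}, and Lemma \ref{comod-opos}, with $\overline{L}$ given the left $A^{\cop}\otk B$-comodule algebra structure you describe. Your bookkeeping remarks (double opposite, transport of the comodule structure, simplicity of $\overline{L}$ so that Theorem \ref{fun-equiv2} applies) are exactly the points implicitly used there.
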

Recall that $\overline{L}$ is the opposite algebra of $L$ with left  $A^{\cop}\otk B$-comodule
structure $l\mapsto (\Ss_A^{-1}\ot \Ss_B)(\tau(l\_{-1}))\ot l\_0$ where $l\mapsto l\_{-1}\ot l\_0$
is the left $B^{\cop}\ot A$-comodule structure and
$\tau: B\otk A\to A\otk B$ is the map $\tau(b\ot a)=a\ot b$.

\bigbreak
Keep in mind that $K, S$ are  finite-dimensional left $A^{\cop}\otk B$-comodule algebras. Using the map
$\pi_B:A\otk B\to B$ the algebras $K, S$ are left $B$-comodule algebras, thus $\overline{S}$ is a right
$B$-comodule algebra:
\begin{equation}\label{comodB-inbar} \overline{S}\to \overline{S}\otk B, \quad s\mapsto s\_0\ot \Ss_B^{-1}(\pi_B(s\_{-1})),
\end{equation}

for all $s\in S$. Hence it makes sense to consider the co-tensor product $\overline{S} \Box_B K$. It is clear that $\overline{S} \Box_B K$
is a subalgebra of $\overline{S}\otk K$.
The following result is \cite[Lemma 2.2]{CCMT}. We shall give the proof for the sake of completeness. 
\begin{lema}\label{cotensorp} $S\otk K$ is a left $B$-comodule with coaction $\rho:S\otk K \to B\ot S\otk K$ given by
$$\rho(s\ot k)=\pi_B(s\_{-1})\pi_B(k\_{-1})\ot s\_0\ot k\_0,$$
for all $s\in S$, $k\in K$. Moreover $(S\ot K)^{\co B}=\overline{S} \Box_B K$. 
\end{lema}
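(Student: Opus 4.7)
The plan is to prove the two assertions separately. Throughout, let $s\mapsto s_{-1}\ot s_0$ and $k\mapsto k_{-1}\ot k_0$ denote the left $B$-coactions induced on $S$ and $K$ via $\pi_B$, so that $\rho(s\ot k)=s_{-1}k_{-1}\ot s_0\ot k_0$.

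The first assertion is a routine check: the counit axiom is immediate, and coassociativity follows by expanding both $(\id\ot\rho)\rho$ and $(\Delta_B\ot\id)\rho$. Using coassociativity of the induced coactions on $S$ and $K$ on the one hand and the fact that $\Delta_B$ is an algebra map on the other, both sides evaluate to the same element of $B\ot B\ot S\ot K$.

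For the equality $(S\otk K)^{\co B}=\overline{S}\Box_B K$, I show that the two defining equations on an element $x=\sum_i s_i\ot k_i$ are equivalent. Call $(\star)$ the cotensor condition $(\delta_{\overline{S}}\ot\id)(x)=(\id\ot\delta_K)(x)$ and $(\star\star)$ the coinvariance condition $\rho(x)=1\ot x$. To derive $(\star\star)$ from $(\star)$, apply $\delta_S\ot\id\ot\id$ to both sides of $(\star)$; coassociativity of $\delta_S$ rewrites the two new $B$-factors together with $\Ss_B^{-1}(s_{-1})$ as $(s_{-1})_2\ot s_0\ot \Ss_B^{-1}((s_{-1})_1)$. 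Multiplying the first and third tensor factors of $B$ then collapses the left side via the antipode identity $(s_{-1})_2\Ss_B^{-1}((s_{-1})_1)=\epsilon(s_{-1})1$ and the counit axiom to $1\ot x$, while the right side becomes $\rho(x)$, giving $(\star\star)$.

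Conversely, apply $\id\ot\delta_S\ot\id$ to both sides of $(\star\star)$; coassociativity rewrites the left side as $\sum_i(s_i{}_{-1})_1 k_i{}_{-1}\ot(s_i{}_{-1})_2\ot s_i{}_0\ot k_i{}_0$. Applying the linear map $T(a\ot b\ot s\ot k)=s\ot\Ss_B^{-1}(b)a\ot k$ and simplifying via $\Ss_B^{-1}((s_{-1})_2)(s_{-1})_1=\epsilon(s_{-1})1$ yields exactly $(\star)$. The main subtle point is selecting, in each direction, the correct variant of the antipode identity --- either $h_2\Ss_B^{-1}(h_1)=\epsilon(h)1$ or $\Ss_B^{-1}(h_2)h_1=\epsilon(h)1$ --- and the correct tensor position at which to apply $\delta_S$ so that the Sweedler factors meet in the right order to collapse; everything else is Sweedler bookkeeping.
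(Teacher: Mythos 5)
Your proof is correct and follows essentially the same route as the paper: in each direction one applies the $B$-coaction of $S$ (via $\pi_B$) once more, multiplies the appropriate pair of $B$-factors, and collapses with the inverse-antipode identities $h\_2\Ss_B^{-1}(h\_1)=\epsilon(h)1$, respectively $\Ss_B^{-1}(h\_2)h\_1=\epsilon(h)1$; your map $T$ is exactly the composite the paper uses in the converse direction. The only difference is that you also spell out the (routine) check that $\rho$ is a coaction, which the paper leaves implicit.
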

\pf Let $ \sum s\ot k\in \overline{S} \Box_B K$. Abusing of the notation, from now on we shall omit the summation symbol. Then
 $$ s\_0\ot \Ss_B^{-1}(\pi_B(s\_{-1}))\ot k
= s\ot \pi_B(k\_{-1})\ot k\_0. $$
Thus we deduce that
$$   \pi_B(s\_0\_{-1}) \ot s\_0\_0\ot \Ss_B^{-1}(\pi_B(s\_{-1}))\ot k =   \pi_B(s_{-1}) \ot s\_0\ot \pi_B(k\_{-1})\ot k\_0.$$
Then
\begin{align*} \rho(s\ot k)=\pi_B(s\_0\_{-1})\Ss_B^{-1}(\pi_B(s\_{-1}))\ot  s\_0\_0\ot k=1\ot s\ot k.
\end{align*}
Now, let $s\ot k\in (S\ot K)^{\co B}$ then
$$ 1\ot s\ot k = \pi_B(s\_{-1})\pi_B(k\_{-1})\ot s\_0\ot k\_0.$$
From this equality we deduce that
$  1\ot s\_0\ot\Ss_B^{-1}( \pi_B(s\_{-1})) \ot k$ is equal to $$ \pi_B(s\_{-1})\pi_B(k\_{-1})\ot s\_0\_0\ot \Ss_B^{-1}( \pi_B(s\_0\_{-1}))\ot k\_0.$$
Then $ s\_0\ot \Ss_B^{-1}(\pi_B(s\_{-1}))\ot k
= s\ot \pi_B(k\_{-1})\ot k\_0, $ so $s\ot k\in  \overline{S} \Box_B K$.

\epf

Define the map $\lambda: \overline{S} \Box_B K\to A\otk A^{\cop}\ot \overline{S} \Box_B K$ by
$$\lambda(s\ot k)= \Ss_A(\pi_A(s\_{-1}))\ot \pi_A(k\_{-1}) \ot s\_0\ot k\_0,$$
for all $s\ot k\in \overline{S} \Box_B K$.
\begin{lema}\label{cotensor-comodalg} $(\overline{S} \Box_B K, \lambda)$ is a left $A\otk A^{\cop}$-comodule algebra.
\end{lema}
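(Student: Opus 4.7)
The plan is to establish three things: (i) that $\lambda$ maps into $A\otk A^{\cop}\ot (\overline{S}\Box_B K)$ and not merely into $A\otk A^{\cop}\ot \overline{S}\otk K$, (ii) that $\lambda$ satisfies coassociativity and counit as a coaction, and (iii) that $\lambda$ is an algebra map with respect to the multiplication on $\overline{S}\Box_B K$ inherited from $\overline{S}\otk K$.

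Conditions (ii) and (iii) are mostly formal. For (iii), recall that the multiplication in $\overline{S}\otk K$ is $(s\ot k)(s'\ot k')=s's\ot kk'$, since $\overline{S}=S^{\op}$. Because $\lambda_S$, $\lambda_K$, and $\pi_A$ are algebra maps, we get $\pi_A((s's)\_{-1})=\pi_A(s'\_{-1})\pi_A(s\_{-1})$; applying $\Ss_A$, which is anti-multiplicative, reverses the order to $\Ss_A(\pi_A(s\_{-1}))\Ss_A(\pi_A(s'\_{-1}))$, matching the product in the first $A$-slot of $\lambda(s\ot k)\lambda(s'\ot k')$. The second slot, being in $A^{\cop}$, keeps the order $\pi_A(k\_{-1})\pi_A(k'\_{-1})$ since $A^{\cop}$ and $A$ share the same underlying multiplication. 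For (ii), I would apply $\pi_A\ot\pi_A\ot\id$ to the coassociativity of $\lambda_S$ (and similarly for $\lambda_K$) to obtain $\Delta_{A^{\cop}}(\pi_A(s\_{-1}))\ot s\_0 = \pi_A(s\_{-1})\ot \pi_A(s\_0\_{-1})\ot s\_0\_0$, and then use the anti-coalgebra identity $\Delta_A\Ss_A=(\Ss_A\ot\Ss_A)\tau\Delta_A$ to transport the antipode past $\Delta_A$; combined, these give exactly the two expressions for $(\Delta\ot\id)\lambda$ and $(\id\ot\lambda)\lambda$. The counit axiom follows from $\epsilon_A\circ\Ss_A=\epsilon_A$ together with $\epsilon_A(\pi_A(h))=\epsilon_{A^{\cop}\otk B}(h)$ for $h\in A^{\cop}\otk B$.

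The main obstacle is (i), the well-definedness. We must show that, for $s\ot k\in \overline{S}\Box_B K$, the element $s\_0\ot k\_0$ appearing in the last two tensor factors of $\lambda(s\ot k)$ again lies in $\overline{S}\Box_B K$. By Lemma \ref{cotensorp} this means verifying
$$ s\_0\_0\ot \Ss_B^{-1}(\pi_B(s\_0\_{-1}))\ot k\_0 = s\_0\ot \pi_B(k\_0\_{-1})\ot k\_0\_0. $$
My strategy is to start from the defining identity of $\overline{S}\Box_B K$, namely $s\_0\ot \Ss_B^{-1}(\pi_B(s\_{-1}))\ot k=s\ot \pi_B(k\_{-1})\ot k\_0$, apply the coactions $\lambda_S$ and $\lambda_K$ respectively to the outer $s$ and $k$ factors, and then use the $B$-projected coassociativity identity $\Delta_B(\pi_B(s\_{-1}))\ot s\_0=\pi_B(s\_{-1})\ot\pi_B(s\_0\_{-1})\ot s\_0\_0$ (and its analogue for $K$) to rewrite both sides. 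Lemma \ref{tech-pi} is the crucial tool here: it ensures that $\pi_A$ and $\pi_B$ commute past each other in the coproduct on $A^{\cop}\otk B$, so that the $A$-data attached by $\lambda$ can be threaded through without disturbing the $B$-cotensor condition. The verification is bookkeeping-intensive but entirely formal.
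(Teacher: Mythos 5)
Your argument is essentially the paper's own proof: well-definedness of $\lambda$ is obtained by applying the coactions $\lambda_S,\lambda_K$ to the defining identity of $\overline{S}\Box_B K$, projecting with $\pi_A$, $\Ss_A$ (and $\pi_B$, $\Ss_B^{-1}$), and invoking Lemma \ref{tech-pi}, while the algebra-map and coaction axioms are exactly the routine checks the paper leaves to the reader. One phrasing caution: to conclude $\lambda(s\ot k)\in A\otk A^{\cop}\ot(\overline{S}\Box_B K)$ you must keep the first two tensor legs and prove the identity for the full element $\Ss_A(\pi_A(s\_{-1}))\ot\pi_A(k\_{-1})\ot s\_0\ot k\_0$ (as the paper does), since the identity you display with those legs suppressed is necessary but not by itself sufficient once sums are involved; the derivation you outline produces the legged version anyway, so this is only a matter of stating the target correctly.
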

\pf Let us prove first that $\lambda$ is well-defined. Let $s\ot k\in \overline{S} \Box_B K$, then
$ s\_0\ot \Ss_B^{-1}(\pi_B(s\_{-1}))\ot k
= s\ot \pi_B(k\_{-1})\ot k\_0, $ hence
\begin{align*} s\_0\_0\ot s\_0\_{-1}\ot \Ss_B^{-1}(\pi_B(s\_{-1}))&\ot k\_{-1}\ot k\_0=\\
&=s\_0\ot s\_{-1}\ot \pi_B(k\_{-1})\ot k\_0\_{-1}\ot k\_0\_0.
\end{align*}
Therefore 
\begin{align*} s\_0\_0\ot& \Ss_A(\pi_A(s\_0\_{-1}))\ot \Ss_B^{-1}(\pi_B(s\_{-1}))\ot \pi_A(k\_{-1})\ot k\_0=\\
&=s\_0\ot \Ss_A(\pi_A(s\_{-1}))\ot \pi_B(k\_{-1})\ot \pi_A(k\_0\_{-1})\ot k\_0\_0.
\end{align*}
Thus $\Ss_A(\pi_A(s\_{-1}))\ot  \pi_A(k\_0\_{-1})\ot s\_0\ot \pi_B(k\_{-1})\ot k\_0\_0$ is equal to
\begin{align*} \Ss_A(\pi_A(s\_0\_{-1}))\ot  \pi_A(k\_{-1})\ot s\_0\_0\ot\Ss_B^{-1}(\pi_B(s\_{-1}))\ot  k\_0.
\end{align*}
Using \eqref{claim11} we get that $\Ss_A(\pi_A(s\_{-1}))\ot  \pi_A(k\_0\_{-1})\ot s\_0\ot \pi_B(k\_{-1})\ot k\_0\_0$
 is equal to $\Ss_A(\pi_A(s\_{-1}))\ot  \pi_A(k\_{-1})\ot s\_0\_0\ot\Ss_B^{-1}(\pi_B(s\_0\_{-1}))\ot  k\_0.$ Hence
$\lambda( \overline{S} \Box_B K)\subseteq  A^{\cop}\ot A\ot\overline{S} \Box_B K$. It follows straightforward 
 that $\lambda$ is an algebra map. \epf

Lemma \ref{cotensorp} implies that the category ${}_{\overline{S} \Box_B K}\Mo$ is a $\Rep(A)$-bimodule category. 
In what follows we shall  study the relation between this  $\Rep(A)$-bimodule category and $\Mo(A,B,K,S)$.

\medbreak

Let $\Fc: {}_{\overline{S} \Box_B K}\Mo\to \Mo(A,B,K,S)$, $\Gc:\Mo(A,B,K,S)\to
{}_{\overline{S} \Box_B K}\Mo $ be the functors defined by
\begin{equation}\label{functor-f}
  \Fc(N)=(\overline{S}\otk K)\ot_{\overline{S} \Box_B K}  N, \quad \Gc(P)=P^{\co B},
\end{equation}
for all $N\in {}_{\overline{S} \Box_B K}\Mo$, $P\in \Mo(A,B,K,S)$. This pair of functors were
 considered first in \cite{D}, see also
\cite{CCMT}. The $(K,S)$-bimodule structure on $ \Fc(N)$ is given as follows:
$$ k' \cdot (s\ot k\ot n)\cdot s' = ss'\ot k' k\ot n, \quad \text{for all } s,s'\in S, k,k'\in K, n\in N.$$
Define the map $\delta: \Fc(N)\to B\otk \Fc(N)$ by
$$ \delta(s\ot k\ot n)=\pi_B(k\_{-1})\pi_B(s\_{-1})\ot s\_0\ot k\_0\ot n,$$
for all $s\in S, k\in K, n\in N.$ It follows from $(S\ot K)^{\co B}=\overline{S} \Box_B K$ that  $\delta$ is well-defined. 
Also $ \Fc(N)\in \Mo(A,B,K,S)$, details are left to the reader. The action of $\overline{S} \Box_B K$
on $\Gc(P)$ is given by
$$ (s\ot k)\cdot p= k \cdot p\cdot s,\quad \text{ for all }s\ot k\in \overline{S} \Box_B K.$$

\begin{prop}\label{mod-funct-coprod} The functors $\Fc, \Gc $
are left and right $\Rep(A)$-module
functors.
\end{prop}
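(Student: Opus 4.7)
The plan is to equip each of $\Fc$ and $\Gc$ with left and right $\Rep(A)$-module functor structures and verify the module functor axioms \eqref{modfunctor1}, \eqref{modfunctor2}, their right-module analogues \eqref{modfunctor11}, \eqref{modfunctor21}, and the bimodule compatibility \eqref{bimod-funct-def}.

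For $\Fc$, the natural candidate for the left module structure is
\[c^\Fc_{X,N}:\Fc(X\otb^l N)\to X\otb^l\Fc(N)=\Fc(N)\ot_S(X\otk S),\quad s\ot k\ot x\ot n\mapsto (s\ot k\ot n)\ot(x\ot 1_S),\]
with an analogous, slightly twisted map $d^\Fc_{N,Y}:\Fc(N\otb^r Y)\to Y\otk\Fc(N)$ moving $y$ out of the balanced tensor. Well-definedness of $c^\Fc_{X,N}$ across the balanced tensor $\ot_{\overline{S}\Box_B K}$ is the first check: it uses the coaction $\lambda$ of Lemma \ref{cotensor-comodalg}, which realises the left $(\overline{S}\Box_B K)$-action on $X\otb^l N$ through $\pi_A$, together with the $S$-action \eqref{s-action1} on $X\otk S$. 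The antipode twist in $\lambda$ is precisely what matches both sides of the balancing relation.

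For $\Gc(P)=P^{\co B}$, the right module structure is transparent: since $\delta_2(y\ot p)=p\_{-1}\ot y\ot p\_0$, one has $(Y\otk P)^{\co B}=Y\otk P^{\co B}$, so $d^\Gc_{P,Y}$ is the identity. For the left structure we define
\[c^\Gc_{X,P}:\bigl(P\ot_S(X\otk S)\bigr)^{\co B}\to X\otk P^{\co B},\quad p\ot x\ot s\mapsto x\ot p\cdot s,\]
which makes sense because $B$-coinvariance of $p\ot x\ot s$, combined with the $B$-coaction on $P$ as a right $S$-module and the cotensor relation defining $\overline{S}\Box_B K$, forces $p\cdot s\in P^{\co B}$.

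The routine but laborious work is to verify that each of these four structure maps is a morphism in its target category---respecting the $(K,S)$-bimodule structure, the $B$-coaction, or the $(\overline{S}\Box_B K)$-action as appropriate---and satisfies the one-sided module functor axioms. The main obstacle is the bimodule compatibility \eqref{bimod-funct-def}, where the two natural ways of moving both $X$ and $Y$ across $\Fc$ (resp.\ $\Gc$) must be reconciled. The key input is Lemma \ref{tech-pi}, which swaps $\pi_A$ and $\pi_B$ on iterated coproducts; combined with the coassociativity of $\lambda$ and the description of $\overline{S}\Box_B K$ given by Lemma \ref{cotensorp}, this closes \eqref{bimod-funct-def} after a careful but elementary calculation.
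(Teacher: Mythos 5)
Your overall strategy coincides with the paper's (write down explicit structure maps for $\Fc$ and $\Gc$ and check the one-sided module functor axioms), but the explicit maps you propose are not the correct ones, and with them the argument fails at the very first verification. Consider your $c^\Fc_{X,N}(s\ot k\ot x\ot n)=(s\ot k\ot n)\ot(x\ot 1)$. On the target $X\otb^l\Fc(N)=\Fc(N)\ot_S(X\otk S)$ the right $S$-action multiplies the last tensor factor, while on $\Fc(X\otb N)$ it multiplies the $\overline{S}$-slot; pushing an element of $S$ across the balanced tensor $\ot_S$ via \eqref{s-action1} produces an action of $\pi_A(-)$ on $X$ that your formula does not account for. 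Concretely, $c^\Fc\big((s\ot k\ot x\ot n)\cdot t\big)=(st\ot k\ot n)\ot(x\ot 1)=(s\ot k\ot n)\ot\big(\pi_A(t\_{-1})\cdot x\ot t\_0\big)$, whereas $c^\Fc(s\ot k\ot x\ot n)\cdot t=(s\ot k\ot n)\ot(x\ot t)$; these differ whenever the $A$-leg of the coaction of $S$ acts nontrivially on $X$, so your map is not right $S$-linear (hence not a morphism in ${}_{K}^B\Mo_S$), and the same discrepancy destroys well-definedness over $\ot_{\overline{S}\Box_B K}$. The paper's map is $c_{X,N}(s\ot k\ot x\ot n)=1\ot k\ot n\ot x\ot s$: the $\overline{S}$-entry is placed in the $S$-slot of $X\otk S$, and both $S$-linearity and the balancing then follow precisely from the identity $x\ot st=s\_0\cdot\big(\Ss_A(\pi_A(s\_{-1}))\cdot x\ot t\big)$ (equation \eqref{s-action2}). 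Your appeal to ``the antipode twist in $\lambda$'' does not rescue the formula you wrote, since the discrepancy computed above survives it.

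The same omission occurs for $\Gc$: the assignment $c^\Gc_{X,P}(p\ot x\ot s)=x\ot p\cdot s$ is not well defined on $\big(P\ot_S(X\otk S)\big)^{\co B}$, because the representatives $p\cdot t\ot x\ot s$ and $p\ot \pi_A(t\_{-1})\cdot x\ot t\_0 s$ of the same element are sent to $x\ot p\cdot ts$ and $\pi_A(t\_{-1})\cdot x\ot p\cdot t\_0 s$ respectively; the correct map carries the compensating twist, namely $c'_{X,P}(p\ot x\ot s)=\Ss_A(\pi_A(s\_{-1}))\cdot x\ot p\cdot s\_0$. Likewise the right-hand structure for $\Fc$, which you leave unspecified as ``slightly twisted,'' must be $d_{N,Y}(s\ot k\ot y\ot n)=\pi_A(k\_{-1})\cdot y\ot s\ot k\_0\ot n$ rather than a naive flip (your $d^\Gc_{P,Y}=\id$ does agree with the paper). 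Finally, note that the proposition only asserts that $\Fc$ and $\Gc$ are left and right $\Rep(A)$-module functors, so verifying the bimodule compatibility \eqref{bimod-funct-def} is not required; the substantive work you are missing is writing down the correctly twisted structure maps and checking that they are morphisms in ${}_{K}^B\Mo_S$ and in ${}_{\overline{S}\Box_B K}\Mo$, which is exactly where \eqref{s-action2} (and, for the comodule-algebra background, Lemma \ref{tech-pi}) enters.
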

\pf First we shall prove that $\Fc$ is a module functor. Let $N\in {}_{\overline{S} \Box_B K}\Mo$ and $X\in \Rep(A)$. Define
$$c_{X,N}: (\overline{S}\otk K)\ot_{\overline{S} \Box_B K} (X\otk N )\to
 \big((\overline{S}\otk K)\ot_{\overline{S} \Box_B K}  N\big) \ot_S (X\otk S)$$
by $c_{X,N}(s\ot k\ot x\ot n)= 1\ot k\ot n\ot x\ot  s$, for all $x\in X$, $s\in S, k\in K, n\in N.$
\begin{claim} The map $c_{X,N}$ is well-defined.
\end{claim}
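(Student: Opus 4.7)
The claim is that the formula $c_{X,N}(s\otimes k\otimes x\otimes n)=1\otimes k\otimes n\otimes x\otimes s$ descends to a well-defined map on the relative tensor product $(\overline{S}\otimes K)\otimes_{\overline{S}\Box_B K}(X\otimes N)$. The plan is to verify the balancing identity
$$c_{X,N}\bigl((s\otimes k)\omega\otimes x\otimes n\bigr)=c_{X,N}\bigl(s\otimes k\otimes\omega\cdot(x\otimes n)\bigr)$$
for every $\omega=t\otimes l\in \overline{S}\Box_B K$, with the output taking place in the target $\bigl((\overline{S}\otimes K)\otimes_{\overline{S}\Box_B K}N\bigr)\otimes_S(X\otimes S)$. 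The left-hand side unfolds immediately to $1\otimes kl\otimes n\otimes x\otimes ts$, using $(s\otimes k)(t\otimes l)=ts\otimes kl$ in $\overline{S}\otimes K$ (recall that $\overline{S}$ carries the opposite multiplication of $S$).

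For the right-hand side, I would compute $\omega\cdot(x\otimes n)$ using the $A\otimes A^{\cop}$-coaction $\lambda$ of Lemma \ref{cotensor-comodalg}. Since $X\in\Rep(A)$ is acted on only by the first $A$-factor of $A\otimes A^{\cop}$ (the $A^{\cop}$-factor being absorbed through $\epsilon$), one obtains
$$\omega\cdot(x\otimes n)=\sum\epsilon(l_{-1})\,\Ss_A(\pi_A(t_{-1}))\cdot x\otimes(t_0\otimes l_0)\cdot n,$$
where $t_0\otimes l_0=\lambda(\omega)_0\in\overline{S}\Box_B K$ by construction. After applying $c_{X,N}$, I would transport $(t_0\otimes l_0)$ across via the $\otimes_{\overline{S}\Box_B K}$-balancing, rewriting the middle factor as $t_0\otimes kl_0\otimes n$, and then push the $S$-factor $t_0$ further into $X\otimes S$ via the $\otimes_S$-balancing using the left $S$-action $s'\cdot(x'\otimes s'')=\pi_A(s'_{-1})\cdot x'\otimes s'_0 s''$.

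At this point the expression takes the shape
$$\sum\epsilon(l_{-1})\,(1\otimes kl_0\otimes n)\otimes\bigl(\pi_A((t_0)_{-1})\Ss_A(\pi_A(t_{-1}))\cdot x\otimes(t_0)_0 s\bigr).$$
Coassociativity of the $A^{\cop}\otimes B$-coaction on $S$ rewrites $t_{-1}\otimes(t_0)_{-1}\otimes(t_0)_0$ as the coproduct expansion $(t_{-1})_{(1)}\otimes(t_{-1})_{(2)}\otimes t_0$, and the antipode identity $\pi_A(h_{(2)})\Ss_A(\pi_A(h_{(1)}))=\epsilon(h)\,1_A$ for $h\in A^{\cop}\otimes B$ (a direct consequence of $a_{(1)}\Ss_A(a_{(2)})=\epsilon(a)1_A$) produces a factor $\epsilon(t_{-1})$. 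The expression becomes $\sum\epsilon(t_{-1})\epsilon(l_{-1})\,(1\otimes kl_0\otimes n)\otimes(x\otimes t_0 s)$, which I would recognize as $\sum\epsilon(\lambda(\omega)_{-1})\phi(\lambda(\omega)_0)$ for the linear map $\phi:\overline{S}\Box_B K\to\bigl((\overline{S}\otimes K)\otimes_{\overline{S}\Box_B K}N\bigr)\otimes_S(X\otimes S)$, $\phi(a\otimes b)=(1\otimes kb\otimes n)\otimes(x\otimes as)$; the counit axiom of the comodule algebra $(\overline{S}\Box_B K,\lambda)$ then collapses the sum to $\phi(\omega)=(1\otimes kl\otimes n)\otimes(x\otimes ts)$, matching the left-hand side.

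The main obstacle is the bookkeeping: handling in parallel the coaction of $\omega$ in $A\otimes A^{\cop}\otimes \overline{S}\Box_B K$, the individual coactions of $t$ and $l$ coupled through the cotensor condition, and the second-level coaction of $t_0$ arising after the $\otimes_S$-move, while correctly identifying at each step which of $\pi_A$, $\pi_B$ and $\Ss_A^{\pm 1}$ appears. Once this accounting is organized the underlying ingredients—coassociativity, the counit axiom, and the antipode axiom for $A$—yield the equality essentially by inspection.
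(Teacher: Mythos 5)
Your proposal is correct and follows essentially the same route as the paper: both verify the balancing condition over $\overline{S}\Box_B K$ for the two relative tensor products, using the opposite multiplication on $\overline{S}$, the transport of elements across $\otimes_{\overline{S}\Box_B K}$ and $\otimes_S$, and the combination of coassociativity, the identity $\pi_A(h_{(2)})\Ss_A(\pi_A(h_{(1)}))=\epsilon(h)1_A$, and the counit axiom. The only difference is organizational: the paper packages that last combination into the preliminary identity $x\ot st= s_{(0)}\cdot (\Ss_A(\pi_A(s_{(-1)}))\cdot x\ot t)$ and shows both sides reach a common intermediate expression, whereas you simplify the right-hand side all the way down to the left-hand side.
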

\begin{proof}[Proof of claim] First observe that for any $x\in X, s,t\in S$ we have that
\begin{align}\label{s-action2} x\ot st= s\_0\cdot (\Ss_A(\pi_A(s\_{-1}))\cdot x\ot t).
\end{align}
Recall that the action of $S$ on $X\otk S$ is given in \eqref{s-action1}.
Let $s'\ot k'\in \overline{S} \Box_B K$,  $x\in X$, $s\in S, k\in K, n\in N.$ Then
\begin{align*}c_{X,N}((s\ot k)\cdot (s'\ot k')\ot x\ot n)&= c_{X,N}(s's\ot kk'\ot x\ot n)=1\ot kk'\ot n\ot x\ot s's\\
&=1\ot kk'\ot n\ot {s'}\_0 \cdot( \Ss_A(\pi_A({s'}\_{-1}))\cdot x\ot s)\\
&={s'}\_0 \ot kk'\ot n\ot  \Ss_A(\pi_A({s'}\_{-1}))\cdot x\ot s
\end{align*}
The second equality follows from \eqref{s-action2}.  On the other hand the element
 $c_{X,N}(s\ot k\ot  \Ss_A(\pi_A({s'}\_{-1}))\cdot x\ot ({s'}\_0\ot k')\cdot n)$ is equal to
\begin{align*}&=1\ot k\ot ({s'}\_0\ot k')\cdot n\ot \Ss_A(\pi_A({s'}\_{-1}))\cdot x\ot s\\
&={s'}\_0 \ot kk'\ot n\ot  \Ss_A(\pi_A({s'}\_{-1}))\cdot x\ot s.
\end{align*}
This finishes the proof of the claim.
\end{proof}
Clearly $c_{X,N}$ is a $(K,S)$-bimodule homomorphism and also a $B$-comodule homomorphism. 
Equations \eqref{modfunctor1} and \eqref{modfunctor2} are  satisfied. 
Thus $(\Fc, c)$ is a module functor.

If $N\in {}_{\overline{S} \Box_B K}\Mo$ and $Y\in \Rep(A)$ define 
$$d_{N,Y}:  (\overline{S}\otk K)\ot_{\overline{S} \Box_B K} (Y\otk N )\to
 Y\otk\big((\overline{S}\otk K)\ot_{\overline{S} \Box_B K}  N\big) $$
by $d_{N,Y}(s\ot k\ot y\ot n)=\pi_A(k\_{-1})\cdot y\ot s\ot k\_0\ot n$ for all
$s\in S, k\in K, n\in N, y\in Y$. It follows from a straightforward computation that the maps
 $d_{N,Y}$ are well-defined isomorphisms in the category $ \Mo(A,B,K,S)$ and they
satisfy equations \eqref{modfunctor11}, \eqref{modfunctor21}. Hence $(\Fc, d)$ is a
module functor.

\medbreak

Now, let us prove that $\Gc$ is a module functor.  Let $P\in  \Mo(A,B,K,S)$, $X, Y\in \Rep(A)$.
Set $c'_{X,P}:\big( P\ot_S X\otk S\big)^{\co B}\to X\otk P^{\co B}$ the map defined by
$$c'_{X,P}(p\ot x\ot s)= \Ss_A(\pi_A(s\_{-1}))\cdot x\ot p\cdot s\_0,$$
for all $p\ot x\ot s\in \big( P\ot_S X\otk S\big)^{\co B}$. Define also $d'_{P,Y}:
:\big( Y\otk P\big)^{\co B}\to Y\otk P^{\co B}$ by
$$ d'_{P,Y}(y\ot p)=y\ot p, \; \text{ for all } y\ot p\in \big( Y\otk P\big)^{\co B}.$$
One can easily prove that $(\Gc, c')$ is a module functor of left $\Rep(A)$-module categories and
 $(\Gc, d')$ is a module functor of right $\Rep(A)$-module categories.
\epf

\section{Tensor product of module categories over a quasi-triangular Hopf algebra}\label{tp-q}

 In this section
$H$ will denote  a finite-dimensional quasi-triangular Hopf algebra. We shall describe
 the tensor product of module categories over $\Rep(H)$.

\subsection{Module categories over a braided tensor category}\label{modcat-over-braided}

First, let us recall some general considerations about the tensor product of module categories
over a braided tensor category. Let $\ca$ be a braided tensor category with braiding $c_{X,Y}:X\ot Y\to Y\ot X$
 for all $X, Y\in\ca$. Let $\Mo$ be a 
right $\ca$-module. Then $\Mo$ has a left $\ca$-module structure $ \ca\times\Mo\to \Mo$ given by $Y\otb^{rev}M :=M\otb Y$
for all $Y\in \ca$, $M\in \Mo$ and the associativity constraints $ m^{rev}_{X,Y,M}:(X\ot Y)\otb^{rev} M\to
   X\otb^{rev} ( Y \otb^{rev} M)$ are given by
$$m^{rev}_{X,Y,M}
=m_{M,Y,X} (\id_M\ot c_{X,Y}),$$
for all $X, Y\in \ca$, $M\in \Mo$. This category is indeed a left $\ca$-module category, see \cite[Lemma 7.2]{Gr},
that we shall denote by $\Mor$.
Equipped with these two structures $\Mo$ is a $\ca$-bimodule category. For details see \cite[Prop. 7.1]{Gr}.

\begin{rmk}\label{mod-o-fun-braided} In particular if $\Mo$ is a right $\ca$-module and $\No$ are left 
$\ca$-module then
$\Mo$ is a bimodule category using the reverse right action, and the tensor product $\Mo \boxtimes_\ca \No$
is a left $\ca$-module category. 
\end{rmk}

If $\Mo$ is a $\ca$-bimodule category then the center $\Ze_\ca(\Mo)$ has two left $\Ze(\ca)$-module
 structures: the 
one denoted by $\otb_l$ explained in section \ref{the-center-bimodcat} given by equation \eqref{rht-bimod-act} and the reverse action
of the right action $\otb_r$  presented in \eqref{lft-bimod-act}. Both right actions give the same module category. 
This result will be useful later.

\begin{prop}\label{proposition-rev} There is an equivalence of left $\Ze(\ca)$-module categories
 between $(\Ze_\ca(\Mo), \otb_{r}^{rev})$ and
 $(\Ze_\ca(\Mo), \otb_l)$.
\end{prop}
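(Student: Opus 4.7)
The plan is to exhibit an explicit equivalence whose underlying functor is the identity. Define $T: (\Ze_\ca(\Mo), \otb_{r}^{rev}) \to (\Ze_\ca(\Mo), \otb_l)$ to be the identity on objects and morphisms, and equip it with the module-functor constraints
$$\alpha_{(X,c_X),(M,\phi^M)} := (\phi^M_X)^{-1}: M \otb X \to X \otb M.$$
The intuition is that the half-braiding $\phi^M$ is exactly the datum relating the left and right actions of a central object on $M$, so it should also mediate between the two left $\Ze(\ca)$-module structures on $\Ze_\ca(\Mo)$.

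First I would verify that $\alpha_{X,M}$ is a morphism in $\Ze_\ca(\Mo)$ from $(M \otb X, \phi^{M \otb X})$ to $(X \otb M, \phi^{X \otb M})$. Substituting the formulas \eqref{lft-bimod-act} and \eqref{rht-bimod-act} for the two half-braidings, the required identity
$$((\phi^M_X)^{-1} \otb \id_Y)\, \phi^{M\otb X}_Y = \phi^{X\otb M}_Y\, (\id_Y \otb (\phi^M_X)^{-1})$$
rearranges, after bringing all $\phi^M_X$'s to one side and using the naturality of $\phi^M$ and of the associativities $\gamma$, $m^l$, $m^r$, into the hexagon axiom \eqref{center-modcat} for the pair $(X,Y)$ applied to $\phi^M$. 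Naturality of $\alpha$ in $M$ is immediate from the fact that a morphism in $\Ze_\ca(\Mo)$ intertwines the half-braidings, and naturality in $(X,c_X)$ is the compatibility of $\phi^M$ with morphisms in $\Ze(\ca)$.

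Second I would verify the left module-functor coherence. By Section~\ref{modcat-over-braided}, the associativity in $(\Ze_\ca(\Mo), \otb_r^{rev})$ is
$$m^{rev}_{X,Y,M} = m^r_{M,Y,X} \circ (\id_M \otb c^{\Ze}_{X,Y}): M \otb (X \ot Y) \to (M \otb Y) \otb X,$$
where $c^{\Ze}_{X,Y}$ is the braiding of $\Ze(\ca)$, which is precisely the half-braiding $c_{XY}$ of $(X,c_X)$ evaluated at $Y$ (the same symbol that appears in \eqref{lft-bimod-act}--\eqref{rht-bimod-act}). The coherence condition
$$(\id_X \otb \alpha_{Y,M})\,\alpha_{X,M\otb Y}\, m^{rev}_{X,Y,M} = m^l_{X,Y,M}\, \alpha_{X\ot Y,M}$$
then unpacks, via the formula for $\phi^{M \otb Y}_X$ coming from \eqref{rht-bimod-act}, into exactly the hexagon \eqref{center-modcat} for $\phi^M$ at $(X,Y)$. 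Because each $\phi^M_X$ is invertible, $\alpha$ is a natural isomorphism, so $(T,\alpha)$ is an equivalence of left $\Ze(\ca)$-module categories.

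The only genuine difficulty is the second step: threading the braiding $c_{X,Y}$ of $\Ze(\ca)$ appearing in $m^{rev}$ through the various associativity constraints to match the structure of $\phi^{X\otb M}$ and $\phi^{M\otb Y}$. All coherences reduce to two inputs, the hexagon defining $\Ze(\ca)$ for $(X,c_X),(Y,c_Y)$ and the hexagon defining $\Ze_\ca(\Mo)$ for $\phi^M$, so no further axioms are required; the computation is a diagrammatic bookkeeping exercise.
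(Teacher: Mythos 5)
Your proposal is correct and is essentially the paper's own argument: the paper likewise takes the identity functor with the half-braiding as the module-functor constraint (it goes in the opposite direction, from $(\Ze_\ca(\Mo),\otb_l)$ to $(\Ze_\ca(\Mo),\otb_r^{rev})$ with constraint $\phi^M_X$ rather than $(\phi^M_X)^{-1}$, which is the same equivalence). Both of your verification steps — that the constraint is a morphism in $\Ze_\ca(\Mo)$, and the module-functor coherence — reduce, exactly as in the paper, to the center axiom \eqref{center-modcat} combined with naturality of $\phi^M$.
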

\pf Define $(\Fc, d):(\Ze_\ca(\Mo), \otb_{1})\to (\Ze_\ca(\Mo), \otb_r^{rev})$ the module functor as follows. The functor
$\Fc$ on objects is the identity, that  is $\Fc(M,\phi^M)=(M,\phi^M)$ for any 
$(M,\phi^M)\in \Ze_\ca(\Mo)$. If $(X, c_X)\in \Ze(\ca)$ define
$$ d_{X,M}: (X, c_X)\otb_l  (M,\phi^M) \to (M,\phi^M)\otb_r(X, c_X), \;\; d_{M,X}=\phi^M_X.$$
The maps $\phi^M_X$ are morphisms in the category $\Ze_\ca(\Mo)$. Indeed, we must prove that for all
$X, Y\in \ca$, $M\in \Mo$
\begin{equation}\label{phi-are-morphisms} (\phi^M_X\ot \id_Y) \phi^{X\otb M}_Y =  \phi^{ M\otb X}_Y (\id_Y
\ot \phi^M_X).
\end{equation}
Using \eqref{center-modcat} one can see that the right hand side of equation \eqref{phi-are-morphisms} equals to
\begin{equation}\label{techn-01} m^r_{M,X,Y}(\id_M\ot c_{YX}) \phi^M_{Y\ot X}(m^l_{Y,X,M})^{-1}, \end{equation}
and the left hand side of equation \eqref{phi-are-morphisms} equals to
\begin{equation}\label{techn-02} m^r_{M,X,Y}  \phi^M_{X\ot Y} ( c_{YX}\ot \id_M) (m^l_{Y,X,M})^{-1}.
\end{equation}
Follows from  the naturality of $\phi$ that the expressions \eqref{techn-01}, \eqref{techn-02} are equal.
 Let us prove now
that the functor $(\Fc, d)$ is a module functor. Equation  \eqref{modfunctor1} amounts to 
\begin{equation}\label{mod-f-c} (\phi^M_Y\ot\id_X)  \phi^{Y\otb M}_X m^l_{X,Y,M} 
= m^r_{M,Y,X}(c_{X,Y}\ot\id_M) \phi^M_{X\ot Y},
\end{equation}
for all $X,Y\in\ca$, $M\in\Mo$. Equation \eqref{mod-f-c} can be checked by a direct computation.
\epf

\subsection{Tensor product of module categories over a quasi-triangular Hopf algebra}

Let $H$ be a finite-dimensional quasi-triangular Hopf algebra with R-matrix $R$. Any left $\Rep(H)$-module category
is a $\Rep(H)$-bimodule category as explained in the beginning of Section  \ref{modcat-over-braided}.
Given two left $H$-comodule algebras $K, S$
 our aim now is to describe the left $\Rep(H)$-module category over the tensor product 
$_K\Mo\boxtimes_{\Rep(H)} {}_S\Mo$ using the
 left module category $\Fun_{Rep(H)}({}_S\Mo, {}_{K}\Mo)$
 and  Proposition \ref{modfunct-caseHopf}.

\begin{prop}\label{mod over bimod} Let $K, S$ be two left $H$-comodule algebras. 
The category ${}_{K}^H\Mo_S$ is a left $\Rep(H)$-module.
\end{prop}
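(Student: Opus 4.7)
The strategy is to transport a left $\Rep(H)$-module structure across the equivalence of Proposition \ref{modfunct-caseHopf},
$$ {}_K^H\Mo_S \simeq \Fun_{\Rep(H)}({}_S\Mo, {}_K\Mo), $$
so it suffices to equip the functor category on the right-hand side with such a structure.

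Since $(H,R)$ is quasi-triangular, $\Rep(H)$ is braided, so the recipe reviewed in Subsection \ref{modcat-over-braided} turns the left $\Rep(H)$-module ${}_S\Mo$ into a $\Rep(H)$-bimodule by using the braiding to introduce a right $\Rep(H)$-action with associator of the form $m^{rev}_{X,Y,M} = m_{M,Y,X}(\id_M\ot c_{X,Y})$. The category ${}_K\Mo$ is kept as a plain left $\Rep(H)$-module.

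Once ${}_S\Mo$ has been enlarged to a $\Rep(H)$-bimodule, the general recipe of Subsection \ref{t-modcat} (cf.\ \cite[Prop.~3.18]{Gr}) equips $\Fun_{\Rep(H)}({}_S\Mo, {}_K\Mo)$ with a left $\Rep(H)$-action given on objects by $(X\otb^l F)(M) = F(M\otb X)$, whose associator is inherited from the right associator of the braided action on ${}_S\Mo$. Transporting this structure back along the equivalence above yields the desired left $\Rep(H)$-module structure on ${}_K^H\Mo_S$.

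The main obstacle, should one wish to exhibit the action directly in terms of $(K,S)$-bimodules rather than via functors, is finding an explicit formula for $X\otb P$ and verifying the bimodule-with-coaction axioms by hand. Tracing through the chain of equivalences suggests taking $X\otb P = X\otk P$ with left $K$-action $k\cdot(x\ot p) = k_{(-1)}\cdot x\ot k_{(0)}\cdot p$, coaction $\delta(x\ot p) = p_{(-1)}\ot x\ot p_{(0)}$, and a right $S$-action twisted by $R$ in order to compensate for the non-cocommutativity of $H$; the compatibility between the coaction and the two actions then reduces to the quasi-triangularity axioms \eqref{qt1}--\eqref{qt2}, which is exactly what is needed to reorder $k_{(-1)(1)}$ and $k_{(-1)(2)}$ that appear on the two sides of the comodule condition.
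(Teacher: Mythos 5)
Your main route has a genuine gap: transporting the module structure across the equivalence of Proposition~\ref{modfunct-caseHopf} is only available when $K$ and $S$ are \emph{right $H$-simple} comodule algebras, since that equivalence (and the exactness of module functors it relies on) is proved only under that hypothesis; the proposition you are asked to prove assumes nothing beyond $K,S$ being left $H$-comodule algebras. This is not a cosmetic point: the paper deliberately proves the proposition by a direct construction valid for arbitrary comodule algebras, namely $X\otb P:= P\ot_S(X\otk S)$ with coaction $\delta(p\ot x\ot s)= p_{(-1)}R^2 s_{(-1)}\ot p_{(0)}\ot R^1\cdot x\ot s_{(0)}$, $K$ acting on $P$ and $S$ by right multiplication on the last factor, and associativity $m_{X,Y,P}(p\ot x\ot y\ot s)=(p\ot R^{-1}\cdot y\ot 1)\ot (R^{-2}\cdot x\ot s)$; the verifications use \eqref{qt2}, \eqref{qt1} and the Yang--Baxter equation. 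Only afterwards, and under the additional simplicity hypothesis, does Theorem~\ref{fun-equiv} identify this explicit structure with the functor-category one you invoke; taking your route would both restrict the generality of the statement and render that later theorem essentially the definition rather than a result.

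Your fallback explicit guess also does not work as written. If you put the coaction $\delta(x\ot p)=p_{(-1)}\ot x\ot p_{(0)}$ together with the diagonal $K$-action $k\cdot(x\ot p)=k_{(-1)}\cdot x\ot k_{(0)}\cdot p$, then the compatibility required in ${}_K^H\Mo_S$ (taking $s=1$ in the defining identity) forces
\begin{equation*}
k_{(-1)(2)}\,p_{(-1)}\ot k_{(-1)(1)}\cdot x\ot k_{(0)}\cdot p_{(0)}
= k_{(-1)(1)}\,p_{(-1)}\ot k_{(-1)(2)}\cdot x\ot k_{(0)}\cdot p_{(0)},
\end{equation*}
which fails for non-cocommutative $H$; since $S$ does not appear in this identity, no amount of $R$-twisting of the right $S$-action can repair it. The $R$-matrix has to enter the interplay between the $H$-coaction and the action on the $X$-factor, exactly as in the paper's formula for $\delta$ (in the presentation $X\otk P\cong P\ot_S(X\otk S)$ the $K$-action is untwisted, the coaction carries the $R$-twist, and the right $S$-action is twisted by the antipode, not by $R$). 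So either add the simplicity hypothesis and argue by transport, or carry out the direct construction with the correct twisted coaction and check the comodule and associativity axioms via the quasi-triangularity identities.
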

\pf Define $\otb:\Rep(H)\times {}_{K}^H\Mo_S\to {}_{K}^H\Mo_S$ by
$$ X\otb P:= P\ot_S( X\otk S),$$
for all $X\in \Rep(H)$, $P\in {}_{K}^H\Mo_S$. Here the left action of $S$ on $X\otk S$ is
given by the coaction of $S$. The object $P\ot_S( X\otk S)\in {}_{R}^H\Mo_S$ 
with structure given by
$$\delta_P(p\ot x\ot s)= p\_{-1} R^{2}s\_{-1}\ot p\_0 \ot R^{1} \cdot x\ot s\_{0}, $$
$$ r\cdot (p\ot x\ot s)=  r \cdot p\ot x\ot s, \quad (p\ot x\ot s)\cdot l=p\ot x\ot sl , $$
for all $p\in P$, $r\in K$, $s, l\in S$. Follows straightforward that these maps are well defined. 
Clearly $P\ot_S( X\otk S)$ is a $(K,S)$-bimodule and $\delta_P$ is a $K$-module
morphism. The associativity isomorphisms 
$$m_{X,Y,P}:P\ot_S(X\otk Y)\otk S\to \big(P\ot_S (Y\otk S) \big)\ot_S X\otk S $$ are given by
$$m_{X,Y,P}(p\ot (x\ot y)\ot s)=(p\ot R^{-1}\cdot y\ot 1)\ot (R^{-2}\cdot x\ot s),$$
for all $p\in P, x\in X, y\in Y, s\in S$.
The maps $m_{X,Y,P}$ are well defined morphisms in the category ${}_{K}^H\Mo_S$. Indeed, let $l\in S$ then 
\begin{align*} m_{X,Y,P}(p\ot l\_{-1} \cdot (x\ot y)\ot l\_0s)&=p\ot R^{-1}l\_{-1}\cdot y\ot 1\ot R^{-2}l\_{-2}\cdot x\ot l\_0 s\\
&=p\ot l\_{-2}R^{-1}\cdot y\ot 1\ot l\_{-1}R^{-2}\cdot x\ot l\_0 s\\
&=p\ot l\_{-1}R^{-1}\cdot y\ot l\_0\ot R^{-2}\cdot x\ot  s\\
&=p\cdot l\ot R^{-1}\cdot y\ot 1\ot R^{-2}\cdot x\ot  s\\
&=m_{X,Y,P}(p\cdot l\ot x\ot y\ot s).
\end{align*}
This proofs that $m_{X,Y,P}$ is well-defined. The proof that $m_{X,Y,P}$ is a $(K,S)$-bimodule
morphism is straightforward. 
Let us prove that $m_{X,Y,P}$ is a comodule map. If $\widetilde{P}=P\ot_S (Y\otk S) $ then
$\delta_{\widetilde{P}}(m_{X,Y,P}(p\ot (x\ot y)\ot s))$ equals to
\begin{align*} p\_{-1}J^2r^2 s\_{-1}\ot p\_0\ot J^1R^{-1}\cdot 
y\ot 1
\ot r^1R^{-2}\cdot x\ot s\_0,
\end{align*}
for any $p\in P$, $x\in X$, $y\in Y$, $s\in S$. Here   $R=R^1\ot R^2=J^1\ot J^2=r^1\ot r^2$. On the other
 hand $  (\id_H\ot m_{X,Y,P}) \delta_P(p \ot (x\ot y)\ot s)$ is 
equal to 
\begin{align*}&= p\_{-1}R^2s\_{-1}\ot m_{X,Y,P}(
p\_0\ot R^1\_1\cdot x\ot R^1\_2\cdot y\ot s\_0)\\&= p\_{-1}R^2s\_{-1}\ot p\_0\ot r^{-1} R^1\_2\cdot y\ot 1\ot r^{-2} R^1\_1\cdot x
\ot s\_0\\
&=  p\_{-1}R^2s\_{-1}\ot p\_0\ot  R^1\_1r^{-1}\cdot y\ot 1\ot R^1\_2r^{-2} \cdot x
\ot s\_0.
\end{align*}
The third equality follows from \eqref{qt2}. Both terms are equal if and only if
$$  J^1R^{-1}\ot  r^1R^{-2}  \ot J^2r^2= R^1\_1r^{-1}\ot     R^1\_2r^{-2}    \ot R^2,$$
and this follows by \eqref{qt1}. The associativity of $m$ follows from the Yang-Baxter equation:
$R_{12}R_{13}R_{23}=R_{23}R_{13}R_{12}$ .
\epf

We shall denote the category ${}_{K}^H\Mo_S$ with the structure of left $\Rep(H)$-module
category explained in Proposition \ref{proposition-rev} by $\Mo(R,K,S)$ to emphasize the
fact that the R-matrix in involved in the module category structure.
\begin{teo}\label{fun-equiv} Let $K, S$ be two right $H$-simple left $H$-comodule algebras.
The equivalence \eqref{modfunct-hopf} establishes an equivalence
$$\Mo(R,K,S)\simeq \Fun_{Rep(H)}({}_S\Mo, {}_{K}\Mo)$$
 of  $\Rep(H)$-modules.
\end{teo}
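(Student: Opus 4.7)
The underlying category equivalence
$\Psi:\Mo(R,K,S)\to \Fun_{\Rep(H)}({}_S\Mo,{}_K\Mo)$, defined on objects by $\Psi(P)(N)=P\ot_S N$ with module functor structure $c^{\Psi(P)}_{X,N}(p\ot x\ot n)=p\_{-1}\cdot x\ot p\_0\ot n$ coming from the $H$-coaction of $P$, is already supplied by Proposition \ref{modfunct-caseHopf}. Thus the task reduces to upgrading $\Psi$ to a left $\Rep(H)$-module equivalence by producing natural isomorphisms $\xi_{X,P}:\Psi(X\otb P)\to X\otb^l\Psi(P)$ in $\Fun_{\Rep(H)}({}_S\Mo,{}_K\Mo)$ satisfying the module functor axioms.

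For each $N\in {}_S\Mo$, both sides are canonically identified with $P\ot_S (X\otk N)$: on the source, $\Psi(X\otb P)(N)=\bigl(P\ot_S(X\otk S)\bigr)\ot_S N$, and on the target, $(X\otb^l\Psi(P))(N)=\Psi(P)(X\otk N)$. I would take $(\xi_{X,P})_N$ to be this canonical identification $p\ot x\ot s\ot n\mapsto p\ot x\ot s\cdot n$, which is visibly natural in $N$ and $K$-linear. The real content is then that $\xi_{X,P}$ is a morphism in $\Fun_{\Rep(H)}({}_S\Mo,{}_K\Mo)$, i.e.\ that the two left module functor structures on $P\ot_S(X\otk -)$ agree. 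On the source, by the recipe of Proposition \ref{modfunct-caseHopf}, the structure is computed from the R-twisted coaction $\delta_P(p\ot x\ot s)=p\_{-1} R^2 s\_{-1}\ot p\_0\ot R^1\cdot x\ot s\_0$ of Proposition \ref{mod over bimod}; specializing to $s=1$ gives the formula $p\ot x\ot y\ot n\mapsto p\_{-1}R^2\cdot y\ot p\_0\ot R^1\cdot x\ot n$. On the target, the prescription of Section \ref{t-modcat} says to first apply $\Psi(P)$ to the bimodule associator $\gamma_{Y,N,X}$ for ${}_S\Mo$ viewed as a $\Rep(H)$-bimodule via the braiding, which amounts to $c_{X,Y}\otk\id_N$ acting on the first two tensor factors, and then apply the ordinary coaction map $c^{\Psi(P)}_{Y,X\otk N}$; a direct computation using $c_{X,Y}(x\ot y)=R^2\cdot y\ot R^1\cdot x$ yields the same formula. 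This is precisely the statement that the R-matrix appearing in the definition of $\Mo(R,K,S)$ encodes the braiding of $\Rep(H)$ seen by $\Fun_{\Rep(H)}({}_S\Mo,{}_K\Mo)$.

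It remains to verify the coherence axioms \eqref{modfunctor1}--\eqref{modfunctor2} for $(\Psi,\xi)$. The unit axiom is immediate. For associativity one must check that $\xi_{X\ot Y,P}$ factors through $\xi_{X,Y\otb P}$ and $\xi_{Y,P}$ compatibly with the associator $m^l_{X,Y,P}$ on $\Mo(R,K,S)$ from Proposition \ref{mod over bimod} and the associator on $\Fun_{\Rep(H)}({}_S\Mo,{}_K\Mo)$ described in Section \ref{t-modcat}. Since $m^l_{X,Y,P}$ is built from $R^{-1}$ (i.e.\ the inverse braiding $c^{-1}_{X,Y}$) and the target associator involves a compatible braiding on two tensor factors of ${}_S\Mo$, the axiom reduces to \eqref{qt1}--\eqref{qt2} and the Yang--Baxter equation $R_{12}R_{13}R_{23}=R_{23}R_{13}R_{12}$ — precisely the identities already used in the proof of Proposition \ref{mod over bimod}. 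I expect no conceptual obstacle beyond the bookkeeping of these R-matrix manipulations; once the key coaction-versus-braiding matching of the previous paragraph is in hand, the remaining coherence checks are formal.
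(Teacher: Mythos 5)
Your proposal is correct and follows essentially the same route as the paper: the paper's proof defines the same functor $\Phi(P)(N)=P\ot_S N$ from Proposition \ref{modfunct-caseHopf} with exactly the same structure maps $(c_{X,P})_N(p\ot x\ot s\ot n)=p\ot x\ot s\cdot n$, and simply asserts that this is a module functor equivalence. The compatibility check you sketch (matching the $R$-twisted coaction of $\Mo(R,K,S)$ against the braiding entering the bimodule associator of ${}_S\Mo$) is precisely the verification the paper leaves to the reader.
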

\pf Define $(\Phi, c): \Mo(R,K,S)\to \Fun_{Rep(H)}({}_S\Mo, {}_{K}\Mo)$ by
$$\Phi(P)(N)=P\ot_S N$$
for all $P\in {}_{K}^H\Mo_S$, $N\in {}_S\Mo$. The natural transformations
$c_{X,P}: \Phi(X\otb P)\to X\otb \Phi(P)$ are defined by
$$\big(c_{X,P}\big)_N: \big( P\ot_S (X\otk S)\big)\ot_S N\to P\ot_S (X\otk N),$$
$$\big(c_{X,P}\big)_N(p\ot x\ot s\ot n)=p\ot x \ot s\cdot n,$$
for all $X\in\ca$, $P\in \Mo(R,K,S)$, $N\in {}_S\Mo$, $x\in X, p\in P, n\in N, s\in S$. The functor
$(\Phi, c)$ is a module functor and is an equivalence of module categories.
\epf

\begin{cor} There is an equivalence of left $\Rep(H)$-modules:
 \begin{equation}\label{tensor-prod-modcat}  \big({}_S\Mo \big)^{\op}\boxtimes_{\Rep(H)}
 {}_K\Mo \simeq  \Mo(R,K,S).
 \end{equation}\qed
\end{cor}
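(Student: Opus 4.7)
The plan is to deduce this statement by combining Theorem \ref{fun-equiv} with the general equivalence from Proposition \ref{funct-mod}, passing through the internal Hom description of the tensor product of module categories.

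First, I would observe that because $\Rep(H)$ is braided (thanks to the R-matrix), the left $\Rep(H)$-module categories ${}_S\Mo$ and ${}_K\Mo$ can be upgraded to $\Rep(H)$-bimodule categories via the recipe recalled in Section \ref{modcat-over-braided}: the reverse right action $Y\otb^{rev}M:=M\otb Y$ (here ``$M\otb Y$'' is really $Y\otb M$ after applying the braiding, as in \cite[Lemma 7.2]{Gr}). In particular $({}_S\Mo)^{\op}$ acquires the structure of a right $\Rep(H)$-module and hence makes the tensor product $({}_S\Mo)^{\op}\boxtimes_{\Rep(H)}{}_K\Mo$ well-defined as a left $\Rep(H)$-module category, as noted in Remark \ref{mod-o-fun-braided}.

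Next, I would apply Proposition \ref{funct-mod} (with $\mathcal{E}=\mathcal{E}'=\Vect\ku$, enlarged via the braiding so that the ambient left $\Rep(H)$-action survives) to obtain a canonical equivalence
\begin{equation*}
 \big({}_S\Mo\big)^{\op}\boxtimes_{\Rep(H)}{}_K\Mo\;\simeq\;\Fun_{\Rep(H)}({}_S\Mo,{}_{K}\Mo)
\end{equation*}
of left $\Rep(H)$-module categories, where the left $\Rep(H)$-action on the right-hand side is the one induced from the right $\Rep(H)$-action on ${}_S\Mo$ (which in turn comes from the braiding). This is the braided specialization of equation \eqref{equival-tens-prod-of-modcat}.

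Finally I would invoke Theorem \ref{fun-equiv}, which identifies $\Fun_{\Rep(H)}({}_S\Mo,{}_{K}\Mo)\simeq\Mo(R,K,S)$ as left $\Rep(H)$-modules, and compose the two equivalences to obtain \eqref{tensor-prod-modcat}. The main obstacle, and really the only substantive check, is that the left $\Rep(H)$-module structure appearing in Proposition \ref{funct-mod}, when transported along the equivalence of Theorem \ref{fun-equiv}, matches the explicit module structure on $\Mo(R,K,S)$ built in Proposition \ref{mod over bimod} (which is exactly where the R-matrix intervenes). Concretely, one must verify that the formula $(c_{X,P})_N(p\ot x\ot s\ot n)=p\ot x\ot s\cdot n$ from the proof of Theorem \ref{fun-equiv} intertwines the action $X\otb P=P\ot_S(X\otk S)$ with the action on $\Fun_{\Rep(H)}$ given by $(X\otb^l F)(N)=F(N\otb X)$, where the right action on ${}_S\Mo$ is precisely the braided reversal of the left action; this amounts to tracing through the associativity isomorphisms $m_{X,Y,P}$ and recovering the identities \eqref{qt1}--\eqref{qt2} that were already verified in Proposition \ref{mod over bimod}. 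Once this compatibility is in place, the composition of the two equivalences yields the desired equivalence of left $\Rep(H)$-module categories.
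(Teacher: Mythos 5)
Your proposal matches the paper's own (implicit) argument: the corollary is stated with no further proof precisely because it is the composition of Theorem \ref{fun-equiv} with the canonical equivalence \eqref{equival-tens-prod-of-modcat} of Proposition \ref{funct-mod}, using the braiding-induced bimodule structure on ${}_S\Mo$ as in Remark \ref{mod-o-fun-braided}. Your extra remarks about checking compatibility of the module structures are reasonable diligence but not a different route; the approach is essentially identical.
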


\subsection{Fusion rules for $\Rep(\ku G)$-modules}

Let $G$ be a finite group. Using the equivalence  \eqref{tensor-prod-modcat}
 we can give another proof of \cite[Corollary 8.10]{Gr} concerning about the tensor product of indecomposable
exact module categories over $\Rep(\ku G)$. The Hopf algebra $\ku G$ is quasi-triangular with trivial R-matrix $1\ot 1$.

\medbreak

For any subgroup $F\subseteq G$ and $\psi\in Z^2(F, \ku^\times)$ the twisted group algebra $\ku_\psi F$
is a right $\ku G$-simple left $\ku G$-comodule algebra. Let $F_i\subseteq G$ be subgroups and $\psi_i\in Z^2(F_i, \ku^\times)$
for $i=1,2$. Let $S\subseteq G$ be a set of representative classes of the double cosets $F_2\diagdown G\diagup F_1$.
For any $s\in S$ define $F_s= s^{-1}F_1s\cap  F_2 $ and $\psi_s\in  Z^2(F_s, \ku^\times)$ the 2-cocycle
defined by
$$\psi_s(x,y)=\psi_1(sxs^{-1},sys^{-1})\, \psi_2(x , y ),$$
for any $x,y\in F_s$. 
\begin{prop}\label{tensor-prod-modcat-group} \cite[Corollary 8.10]{Gr}  There is an equivalence 
 \begin{equation}\label{tens-prod-g} {}_{\ku_{\psi_1}F_1}\Mo \boxtimes_{\Rep(\ku G)}
{}_{\ku_{\psi_2}F_2}\Mo \simeq \bigoplus_{s\in S} \, {}_{\ku_{\psi_s}F_s}\Mo
  \end{equation}
\end{prop}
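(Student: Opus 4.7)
The plan is to combine the general tensor product formula \eqref{tensor-prod-modcat} with a double-coset decomposition of the resulting bicomodule bimodule category.

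\emph{Step 1.} Since $\ku G$ is quasi-triangular with trivial $R$-matrix, equivalence \eqref{tensor-prod-modcat} applies. Using Lemma \ref{comod-opos} we write ${}_{\ku_{\psi_1}F_1}\Mo \simeq ({}_T\Mo)^{\op}$, where $T = \overline{\ku_{\psi_1}F_1}$ is a left $\ku G$-comodule algebra whose underlying algebra is, up to a cohomologically trivial twist, $\ku_{\psi_1^{-1}}F_1$, and whose $\ku G$-coaction is supported on $F_1\subseteq G$. This yields
$$ {}_{\ku_{\psi_1}F_1}\Mo \boxtimes_{\Rep(\ku G)} {}_{\ku_{\psi_2}F_2}\Mo \;\simeq\; \Mo(R, \ku_{\psi_2}F_2, T). $$

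\emph{Step 2.} Because $\ku G$ is cocommutative, an object of $\Mo(R, \ku_{\psi_2}F_2, T) = {}_{\ku_{\psi_2}F_2}^{\ku G}\Mo_T$ is a $G$-graded $(\ku_{\psi_2}F_2, T)$-bimodule $P = \bigoplus_{g\in G}P_g$ with grading compatible with the bimodule structure. Left multiplication by $F_2$ and right multiplication by $F_1$ permute the homogeneous components according to the two-sided action on $G$, so the $G$-support of any sub-bimodule is a union of double cosets $F_2 s F_1$. We obtain
$$ \Mo(R, \ku_{\psi_2}F_2, T) \;\simeq\; \bigoplus_{s\in S} \Mo_s, $$
where $\Mo_s$ denotes the full subcategory of bimodules supported on $F_2 s F_1$.

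\emph{Step 3.} For each $s\in S$, the functor $P \mapsto P_s$ is an equivalence from $\Mo_s$ to the category of vector spaces equipped with a projective action of the stabilizer subgroup $F_s = s^{-1}F_1 s\cap F_2$: the non-stabilizer part of $F_2\times F_1$ moves $P_s$ to other homogeneous components (thereby reconstructing the whole bimodule on $F_2 s F_1$), while elements $x\in F_s$ act on $P_s$ itself via an expression of the form $p\mapsto e_x\cdot p\cdot(e_{s^{-1}xs})^{-1}$, using the products in $\ku_{\psi_2}F_2$ and the inversion formula in $T$. A direct computation shows that the resulting $2$-cocycle on $F_s$ is exactly $\psi_s(x,y) = \psi_1(sxs^{-1}, sys^{-1})\psi_2(x,y)$, so $\Mo_s \simeq {}_{\ku_{\psi_s}F_s}\Mo$ as $\Rep(\ku G)$-modules (triviality of $R$ guarantees that the left $\Rep(\ku G)$-actions match on each side). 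Summing the blocks gives the claimed equivalence.

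The \emph{main obstacle} is the cocycle bookkeeping in Step 3: two inversions of $\psi_1$---one arising from the opposite-algebra passage defining $T = \overline{\ku_{\psi_1}F_1}$, the other from the explicit inverse $(e_{s^{-1}xs})^{-1}\in T$ entering the $F_s$-action on $P_s$---must combine carefully to produce $\psi_1$ (rather than $\psi_1^{-1}$) evaluated at the $s$-conjugated pair $(sxs^{-1}, sys^{-1})$, and to yield the stated form of the stabilizer $F_s = s^{-1}F_1s\cap F_2$ together with the projective action convention.
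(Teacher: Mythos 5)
Your proposal follows essentially the same route as the paper: identify the tensor product with ${}^{\ku G}_{\ku_{\psi_2}F_2}\Mo_{\overline{\ku_{\psi_1}F_1}}$ via \eqref{tensor-prod-modcat} and Lemma \ref{comod-opos}, decompose objects according to their $\ku G$-grading into blocks indexed by double cosets, and identify each block with ${}_{\ku_{\psi_s}F_s}\Mo$ by passing to the homogeneous component at $s$, where the stabilizer $F_s$ acts projectively with the conjugation-twisted cocycle $\psi_s$. The paper's proof is the same argument (including the same convention bookkeeping you flag), except that instead of merely asserting the component functor is an equivalence it writes down the explicit quasi-inverse $\Gc(W)=(\ku F_1\otk \ku F_2)\ot_{\ku_{\psi_s}F_s}W$.
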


\begin{proof} Let $V\in  {}_{\ku_{\psi_2}F_2}^{\ku G}\!\Mo_{\overline{\ku_{\psi_1}F_1}}$ with
coaction given by $\delta:V\to \ku G\ot V$. Then
 $V=\oplus_{g\in G} V_g$ where $V_g=\{v\in V: \delta(v)=g\ot v\}$. For any $s\in S$ define
$$ V\_s=\oplus_{g\in F_1 s F_2} \, V_g,$$
thus $V=\oplus_{s\in S}\,  V\_s$ and each vector space $V\_s$ is a subobject of $V$ in 
the category ${}_{\ku_{\psi_2}F_2}^{\ku G}\!\Mo_{\overline{\ku_{\psi_1}F_1}}$.

The subspace $ V_s$ carries a structure of $\ku_{\psi_s}F_s$ as follows. For any $h\in F_s$, $v\in V_s$ 
define
$$ h \triangleright v= h\cdot ( v\cdot sh s^{-1}).$$
Define the functor  $\Fc:{}_{\ku_{\psi_2}F_2}^{\ku G}\!\Mo_{\overline{\ku_{\psi_1}F_1}}\longrightarrow
\bigoplus_{s\in S} \, {}_{\ku_{\psi_s}F_s}\Mo$, $\Fc(V)=\oplus_{s\in S}\,  V_s$ and for any $s\in S$ the vector space 
$V_s$  has the action
of $\ku_{\psi_s}F_s$ as explained before. The functor $\Fc$ is indeed a module functor. \smallbreak

Let $V\in  {}_{\ku_{\psi_2}F_2}^{\ku G}\!\Mo_{\overline{\ku_{\psi_1}F_1}}$ and assume that
 $V=V\_s$ for some $s\in S$. It is not difficult to see that  
$$(X\otb V)\_s=X\otb V=V\ot_{\overline{\ku_{\psi_1}F_1}} (X\otk\, \overline{\ku_{\psi_1}F_1} )$$ for any
 $X\in \Rep(\ku G)$,
hence 
$$\Fc(X\otb V)=\oplus_{f\in F_1} V_{sf}\ot_{\ku_{\psi_1}F_1} \big(X\otk \,\ku_{\psi_1}F_1\big)$$ as vector spaces. Define
$c_{X, V}:\Fc(X\otb V)\to X\otk\Fc(V)$ by
$$ c_{X, V}(v\ot x\ot f)=f \cdot x\ot v\cdot f,$$
for any $x\in X, v\in V, f\in F_1$. It follows from a straightforward computation that the map $c_{X, V}$ is
well-defined and  equations \eqref{modfunctor1}, 
\eqref{modfunctor2}
are satisfied. Now, define $\Gc: \bigoplus_{s\in S} \, {}_{\ku_{\psi_s}F_s}\Mo \to{}_{\ku_{\psi_2}F_2}^{\ku G}\!\Mo_{\overline{\ku_{\psi_1}F_1}} $
as follows. If $W\in {}_{\ku_{\psi_s}F_s}\Mo$ for some $s\in S$ then
$$\Gc(W)= (\ku F_1\otk \ku F_2)\ot_{\ku_{\psi_s}F_s} W.$$
The right action of $\ku_{\psi_s}F_s$ on the tensor product $\ku F_1\otk \ku F_2$ is 
$$ (x\ot y )\cdot f = \psi_1(x^{-1},sfs^{-1})\psi_2(y,  f )\; s^{-1}f^{-1} sx \ot   yf,$$
for all $x\in F_1, y\in F_2, f\in F_s$. 

For any $x,f\in F_1,$ $ y,g\in F_2$, $w\in W$ define
$$g\cdot (x\ot y\ot w)=\psi_2(g,y)\,(x\ot gy\ot w), $$ $$  (x\ot y\ot w) \cdot f= \psi_1(f, x^{-1})\,   (xf^{-1}\ot y\ot w),$$
$$ \delta(x\ot y\ot w)= ysx\, \ot (x\ot y\ot w).$$
Equipped with these maps the object $\Gc(W)$ is an object in the category 
${}_{\ku_{\psi_1}F_2}^{\ku G}\!\Mo_{\overline{\ku_{\psi_2}F_1}}$.
\end{proof}

\section{Applications for computing the Brauer-Picard group}\label{bpg}

\subsection{The Brauer-Picard group of a tensor category}

Let $\ca_1, \ca_2$ be finite tensor categories. The following definitions were given 
 in \cite{ENO}.

\begin{defi}\begin{itemize}
             \item[(a)] An exact $(\ca_1,\ca_2)$-bimodule category $\Mo$ is \emph{invertible} if there are bimodule equivalences
$$ \Mo^{\op} \boxtimes_{\ca_1} \Mo \simeq \ca_2, \quad  \Mo \boxtimes_{\ca_2} \Mo^{\op} \simeq \ca_1.$$
\item[(b)] The Brauer-Picard groupoid $\underline{\underline{\text{BrPic} }}$ 
is the 3-groupoid whose objects are finite tensor categories, 1-morphisms from
$\ca_1$ to $\ca_2$ are invertible $(\ca_1,\ca_2)$-bimodule categories, 2-morphisms are equivalences
 of such bimodule categories, and 3-morphisms are isomorphisms
of such equivalences. Forgetting the 3-morphisms and the 2-morphisms and identifying 1-morphisms one obtains
the groupoid BrPic. The group $\text{BrPic}(\ca)$ of automorphisms of $\ca$ in BrPic is called the\emph{ Brauer-Picard group
of }$\ca$.

            \end{itemize}

\end{defi}
\subsection{Invertible module categories over a braided tensor category}
Let $\ca$ be a braided tensor category. Any left $\ca$-module category is a $\ca$-bimodule category using the reverse action as
explained in section \ref{modcat-over-braided}. 
\begin{defi} We shall say that an exact $\ca$-module category $\Mo$ is \emph{invertible} if there 
is a bimodule equivalence
$$ \Mo^{\op}\boxtimes_\ca \Mo\simeq \ca.$$
The group of invertible  $\ca$-module categories will be denoted by $\invm(\ca)$
\end{defi}

\begin{prop} Let $\ca$ be a  tensor category. There is an isomorphism of groups $\brp(\ca)\simeq \invm(\Ze(\ca)).$
\end{prop}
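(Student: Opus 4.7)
The plan is to construct a group homomorphism $\Phi:\brp(\ca)\to \invm(\Ze(\ca))$ via the center construction $\Mo\mapsto \Ze_\ca(\Mo)$, and to build a two-sided inverse for it. By the Lemma cited from \cite{Gr}, $\Ze_\ca(\Mo)$ is a $\Ze(\ca)$-bimodule category; combined with Proposition \ref{proposition-rev}, its right action is (canonically equivalent to) the reverse of its left action, so it is a left $\Ze(\ca)$-module in the sense made precise in section \ref{modcat-over-braided}. Thus $\Phi$ is at least well-defined as an assignment on objects.

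First I would verify the two defining identities of a group homomorphism. On the unit, $\Ze_\ca(\ca)\simeq \Ze(\ca)$ by unwinding the definition of the center of a bimodule category against the regular bimodule, which matches the Drinfeld center. For the product, I would establish the key compatibility
\[
\Ze_\ca(\Mo\boxtimes_\ca \No)\simeq \Ze_\ca(\Mo)\boxtimes_{\Ze(\ca)}\Ze_\ca(\No)
\]
as $\Ze(\ca)$-bimodule categories. The natural functor from right to left sends a pair $\bigl((M,\phi^M),(N,\phi^N)\bigr)$ to $M\boxtimes N$ equipped with the half-braiding $\phi^M\boxtimes \id + \id \boxtimes \phi^N$ (combined through the balancing isomorphisms of the relative tensor product), and balancing over $\Ze(\ca)$ ensures that this is well-defined modulo the $\Ze(\ca)$-action on the middle.

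Next I would construct an inverse $\Psi:\invm(\Ze(\ca))\to \brp(\ca)$. Given $\No\in \invm(\Ze(\ca))$, I would set $\Psi(\No)=\ca\boxtimes_{\Ze(\ca)}\No$, where $\ca$ is viewed as a $(\ca,\Ze(\ca))$-bimodule category through the forgetful tensor functor $\Ze(\ca)\to \ca$, and the right $\ca$-action on $\Psi(\No)$ comes from the remaining $\ca$-action on the left factor through the braided structure on the center. Using Proposition \ref{funct-mod} one checks that $\Phi\circ \Psi$ and $\Psi\circ \Phi$ are naturally equivalent to the identity: the composition $\Ze_\ca\bigl(\ca\boxtimes_{\Ze(\ca)}\No\bigr)$ collapses back to $\No$ because taking the center relative to $\ca$ undoes the base change along $\Ze(\ca)\to \ca$, while $\ca\boxtimes_{\Ze(\ca)}\Ze_\ca(\Mo)\simeq \Mo$ follows from the fact that a bimodule category is recovered from its center via induction.

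The main obstacle will be proving the tensor-product compatibility for $\Phi$ and the pair of invertibility statements that make $\Phi$ land in $\invm(\Ze(\ca))$ and $\Psi$ land in $\brp(\ca)$. Concretely, one must verify that if $\Mo^{\op}\boxtimes_\ca \Mo\simeq \ca$ as $\ca$-bimodules, then applying $\Ze_\ca$ to both sides, together with the compatibility identity above, yields $\Ze_\ca(\Mo)^{\op}\boxtimes_{\Ze(\ca)}\Ze_\ca(\Mo)\simeq \Ze(\ca)$, and similarly on the other side. The delicate point throughout is bookkeeping of the half-braidings and the interaction of left/right $\ca$-actions with the braiding of $\Ze(\ca)$; once this is handled, the two assignments are visibly mutually inverse group homomorphisms and the isomorphism $\brp(\ca)\simeq \invm(\Ze(\ca))$ follows.
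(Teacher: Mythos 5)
Your overall strategy is the paper's: the proposition is proved by showing that the center construction $\Mo\mapsto\Ze_\ca(\Mo)$, viewed as a one-sided $\Ze(\ca)$-module category via Proposition \ref{proposition-rev}, induces a group isomorphism $\brp(\ca)\to\invm(\Ze(\ca))$. The difference is that the paper disposes of all the substance by citing \cite[Thm.~7.13, Lemma~7.14]{Gr}, whereas you propose to prove the needed facts directly --- and it is exactly at those points that your argument has genuine gaps. The key compatibility $\Ze_\ca(\Mo\boxtimes_\ca\No)\simeq\Ze_\ca(\Mo)\boxtimes_{\Ze(\ca)}\Ze_\ca(\No)$ is only asserted: the universal property of $\boxtimes_{\Ze(\ca)}$ gives you a comparison functor, but showing it is an \emph{equivalence} is the nontrivial content (this is precisely Greenough's theorem; it rests on the identification of $\Ze(\ca)$ with the dual of $\ca\boxtimes\ca^{\text{rev}}$ with respect to the module category $\ca$, together with exactness hypotheses that you never invoke --- for a bare tensor category and arbitrary bimodule categories the statement is not automatic). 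The same applies to the inverse: that $\Ze_\ca\bigl(\ca\boxtimes_{\Ze(\ca)}\No\bigr)$ ``collapses back'' to $\No$ and that $\ca\boxtimes_{\Ze(\ca)}\Ze_\ca(\Mo)\simeq\Mo$ is not something Proposition \ref{funct-mod} gives you; it is again the cited Morita-type result, and saying that the center ``undoes the base change'' is a restatement of what must be proved, not a proof.

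Two smaller but real defects: the half-braiding you put on $M\boxtimes N$ is written as $\phi^M\boxtimes\id+\id\boxtimes\phi^N$, which is not meaningful --- the two half-braidings must be \emph{composed}, $X\otb(M\boxtimes N)\simeq(X\otb M)\boxtimes N\xrightarrow{\phi^M_X\boxtimes\id}(M\otb X)\boxtimes N\simeq M\boxtimes(X\otb N)\xrightarrow{\id\boxtimes\phi^N_X}M\boxtimes(N\otb X)\simeq(M\boxtimes N)\otb X$, using the balancing and associativity constraints, and one must check the axiom \eqref{center-modcat} for the composite. Moreover, to deduce invertibility of $\Ze_\ca(\Mo)$ from $\Mo^{\op}\boxtimes_\ca\Mo\simeq\ca$ you silently use $\Ze_\ca(\Mo^{\op})\simeq\Ze_\ca(\Mo)^{\op}$ as $\Ze(\ca)$-modules and $\Ze_\ca(\ca)\simeq\Ze(\ca)$ compatibly with the monoidal comparison; neither is checked. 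So the skeleton is correct and matches the paper, but as written the proposal replaces the paper's citations by assertions rather than by proofs, and the statement does not follow without supplying them (or without simply citing \cite{Gr} as the paper does).
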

\pf Denote by $\Ze: \bim(\ca)\to \Mod(\ca)$ the center functor. As a consequence of \cite[Thm. 7.13, Lemma 7.14]{Gr}
 and Proposition \ref{proposition-rev} this functor restricts to an isomorphism of groups. \epf

\subsection{Invertible $\Rep(H)$-bimodule categories}

In this section we study the tensor product of invertible module categories over the representation
categories of Hopf algebras using the tools developed in the previous sections.
\medbreak

Let $H$ be a finite-dimensional Hopf algebra. Recall that if  $\Mo$ is a
$\Rep(H)$-bimodule category, then there exists a left $H\otk H^{\cop}$-comodule algebra $K$, right
$H\otk H^{\cop}$-simple with trivial coinvariants such that $\Mo\simeq {}_K\Mo$ as  $\Rep(H)$-bimodule categories.

\begin{teo}\label{invert-comod} Let $K, S$ be left $H\otk H^{\cop}$-comodule algebras 
right $H\otk H^{\cop}$-simple with trivial coinvariants. Assume also that
\begin{itemize}
 \item[(i)] $S\otk K$ is free as a left $S \Box_H K$-module,
 \item[(ii)] the module category ${}_{S\Box_H K}\Mo$ is exact, and
 \item[(iii)] ${}_S\Mo, {}_K\Mo$ are invertible $\Rep(H)$-bimodule categories.
\end{itemize}
Then, there is an equivalence of 
$\Rep(H)$-bimodule categories
\begin{equation}\label{pro-cot-a}{}_S\Mo \boxtimes_{\Rep(H)}
 {}_K\Mo
 \simeq {}_{S\Box_H K}\Mo.
\end{equation}
\end{teo}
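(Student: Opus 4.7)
The plan is to combine Corollary~\ref{tens-bimod} with the canonical pair of bimodule functors built in Proposition~\ref{mod-funct-coprod} and then show that pair is a mutually inverse equivalence by invoking faithfully flat Hopf--Galois descent.

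\medbreak

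As a first step, Corollary~\ref{tens-bimod} applied with $A=B=H$ provides a canonical equivalence of $\Rep(H)$-bimodule categories
$$
{}_S\Mo\boxtimes_{\Rep(H)}{}_K\Mo\;\simeq\;\Mo(H,H,K,\overline{S}).
$$
Thus the theorem reduces to constructing an equivalence of $\Rep(H)$-bimodule categories between $\Mo(H,H,K,\overline{S})={}^{H}_{K}\Mo_{\overline{S}}$ and ${}_{S\Box_H K}\Mo$, where the cotensor product is formed with respect to the left $H$-comodule structures on $S$ and $K$ induced by $\pi_H$ as in the discussion preceding Lemma~\ref{cotensorp}.

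\medbreak

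For this comparison I would use the pair
$$
\Fc:{}_{S\Box_H K}\Mo\longrightarrow \Mo(H,H,K,\overline{S}),\qquad \Gc:\Mo(H,H,K,\overline{S})\longrightarrow {}_{S\Box_H K}\Mo
$$
with $\Fc(N)=(\overline{S}\otk K)\ot_{S\Box_H K}N$ and $\Gc(P)=P^{\co H}$, which by Proposition~\ref{mod-funct-coprod} already carry structures of $\Rep(H)$-bimodule functor. It is therefore enough to prove that $\Fc$ and $\Gc$ are mutually quasi-inverse as plain functors of abelian categories; the bimodule functor structures will then upgrade the equivalence to a bimodule equivalence automatically. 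The key observation is that, by Lemma~\ref{cotensorp}, $S\Box_H K$ is the algebra of $H$-coinvariants of the right $H$-comodule algebra $\overline{S}\otk K$, and by hypothesis (i) this extension is free, hence faithfully flat. In this setting the unit and counit
$$
N\longrightarrow\bigl((\overline{S}\otk K)\ot_{S\Box_H K}N\bigr)^{\co H},\qquad (\overline{S}\otk K)\ot_{S\Box_H K}P^{\co H}\longrightarrow P
$$
are isomorphisms by Schneider's structure theorem for relative Hopf modules over a faithfully flat Hopf--Galois extension, which is exactly the content of $\Gc\Fc\simeq\id$ and $\Fc\Gc\simeq\id$. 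Hypothesis (ii) ensures that ${}_{S\Box_H K}\Mo$ is an exact $\Rep(H)$-bimodule category so that both sides live in the same setting, while hypothesis (iii), through the fact that a tensor product of invertible bimodule categories is invertible and therefore indecomposable, guarantees that the Galois condition underlying Schneider's theorem is indeed in force (for invertible $\Rep(H)$-bimodule categories the associated comodule algebra is a Hopf--Galois extension of its coinvariants).

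\medbreak

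The principal obstacle will be the descent step: verifying that the Hopf--Galois canonical map is bijective for the specific extension $S\Box_H K\subseteq \overline{S}\otk K$ and then invoking Schneider's theorem to conclude bijectivity of the unit and counit displayed above. Invertibility in (iii) is what makes the Galois-type condition available, freeness in (i) is what supplies faithful flatness, and exactness in (ii) is what lets the resulting equivalence of abelian categories be read as an equivalence of exact $\Rep(H)$-bimodule categories as in~\eqref{pro-cot-a}. Once all three hypotheses are brought to bear in this way, the compatibility with the left and right $\Rep(H)$-actions is free of charge from Proposition~\ref{mod-funct-coprod}.
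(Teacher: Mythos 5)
Your opening move coincides with the paper's: Corollary \ref{tens-bimod} with $A=B=H$ reduces the theorem to comparing $\Mo(H,H,K,\overline{S})$ with ${}_{S\Box_H K}\Mo$ via the functors $\Fc,\Gc$ of Proposition \ref{mod-funct-coprod}, and it is true that once these are shown to be quasi-inverse the bimodule structures come along for free. The divergence, and the gap, is in how you conclude that $\Fc$ and $\Gc$ are mutually quasi-inverse. Schneider's structure theorem needs two inputs: faithful flatness of $\overline{S}\otk K$ over its coinvariants $S\Box_H K$, which you legitimately extract from hypothesis (i), and bijectivity of the canonical Galois map $(\overline{S}\otk K)\ot_{S\Box_H K}(\overline{S}\otk K)\to (\overline{S}\otk K)\otk H$. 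For the latter you offer only the assertion that invertibility of ${}_S\Mo$ and ${}_K\Mo$ makes ``the Galois condition available''; no argument is given, and no such implication is proved in the paper or citable in this form. Invertibility is a statement about $\boxtimes_{\Rep(H)}$-products of bimodule categories, and turning it into bijectivity of the canonical map for the specific extension $S\Box_H K\subseteq \overline{S}\otk K$ is exactly the nontrivial content your proof would have to supply; as written, this step is a genuine gap rather than a routine invocation.

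The paper uses hypothesis (iii) in a completely different way, which is how it avoids the Galois condition altogether: a tensor product of invertible bimodule categories is invertible, hence indecomposable, so $\Mo(H,H,K,\overline{S})$ is indecomposable; freeness (i) gives that $\Fc$ is full and faithful; exactness (ii) gives that the essential image of $\Fc$ is an exact submodule category; and indecomposability then forces this submodule category to be everything, i.e.\ $\Fc$ is an equivalence (no inverse functor, and in particular no descent theorem, is ever needed). If you want to keep your descent-theoretic route you must actually prove the Hopf--Galois property of $S\Box_H K\subseteq \overline{S}\otk K$ under hypotheses (i)--(iii), which is not done in the paper and is not obviously true; otherwise the categorical indecomposability argument is the intended use of the three hypotheses.
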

\pf 
By Corollary \ref{tens-bimod} there exists an equivalence of $\Rep(H)$-bimodule
categories 
$$ {}_S\Mo \boxtimes_{\Rep(H)}
 {}_K\Mo
 \simeq  \Mo(H,H,K,\overline{S}).$$
Since invertible bimodule categories are indecomposable, then the category $\Mo(H,H,K,\overline{S})$ is an indecomposable bimodule category. 
Consider the functor $\Fc:{}_{S\Box_H K}\Mo \to  \Mo(H,H,K,\overline{S})$ explained in
\eqref{functor-f}. Since $S\otk K$ is free as a left $S\Box_H K$-module then
$\Fc$ is full and faithful. The full subcategory of $\Mo(H,H,K,\overline{S})$ consisting of objects $\Fc(N)$
where $N \in {}_{S\Box_H K}\Mo$ is an exact submodule category and since
$\Mo(H,H,K,\overline{S})$ is indecomposable, $\Fc$ must be an equivalence, see
\cite[pag. 91]{Mac}.\epf

The left $H\otk H^{\cop}$-comodule algebra $\diag(H)$ can be thought as
a coideal subalgebra in  $H\otk H^{\cop}$. The map $\iota:
\diag(H)\to  H\otk H^{\cop}$ given by $\iota(h)=h\_1\ot h\_2$ is an injective
comodule algebra map. Let $Q$ be the
coalgebra quotient $(H\otk H^{\cop})/ (H\otk H^{\cop}) \diag(H)^+$.

\begin{cor}\label{corol1} Let $K, S$ be left $H\otk H^{\cop}$-comodule algebras 
right $H\otk H^{\cop}$-simple with trivial coinvariants such that conditions
(i) and (ii) of Theorem \ref{invert-comod}  are fulfilled and
${}_S\Mo \boxtimes_{\Rep(H)}
 {}_K\Mo
 \simeq  \Rep(H).$ Then, there is
an isomorphism of $H\otk H^{\cop}$-comodule algebras
\begin{equation}\label{iso-cop1}
 S\Box_H K\simeq \End_{\diag(H)}(H\otk H^{\cop}\Box_Q V),
\end{equation}
for some $V\in {}^Q\Mo$. Moreover
\begin{equation}\label{iso-cop2}(S\Box_H K)^{\co H\otk H^{\cop}}=\End^Q(V).
\end{equation}

\end{cor}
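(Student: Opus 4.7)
My plan is to combine Theorem \ref{invert-comod} with the standard Morita-type classification of exact module categories in terms of comodule algebras, applied to the distinguished coideal subalgebra $\diag(H) \subseteq H \otk H^{\cop}$.

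First I would use Theorem \ref{invert-comod} together with the hypothesis ${}_S\Mo \boxtimes_{\Rep(H)} {}_K\Mo \simeq \Rep(H)$ to obtain an equivalence of $\Rep(H)$-bimodule categories
\begin{equation*}
{}_{S\Box_H K}\Mo \simeq \Rep(H).
\end{equation*}
By Lemma \ref{diagonal-comod}, the right hand side is equivalent to ${}_{\diag(H)}\Mo$ as a $\Rep(H)$-bimodule category, hence as a left $\Rep(H\otk H^{\cop})$-module category. Thus the two left $H\otk H^{\cop}$-comodule algebras $S\Box_H K$ and $\diag(H)$ produce equivalent module categories over $\Rep(H\otk H^{\cop})$.

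Second, I would appeal to the standard description of equivalences between module categories associated to comodule algebras (in the spirit of Proposition \ref{modfunct-caseHopf}): any such equivalence is implemented by a $(S\Box_H K,\diag(H))$-bimodule $P$ in ${}^{H\otk H^{\cop}}\Mo$, with $P$ invertible in the appropriate bimodule sense, and one has a canonical isomorphism of $H\otk H^{\cop}$-comodule algebras $S\Box_H K \simeq \End_{\diag(H)}(P)$. The essential point here is that the $(S\Box_H K, \diag(H))$-bimodule $P$ producing the equivalence is necessarily a finitely generated projective right $\diag(H)$-module, so this endomorphism algebra inherits a natural left $H\otk H^{\cop}$-coaction and the isomorphism respects coactions.

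Third, I would classify such bimodules $P$ using the coideal subalgebra structure. Since $\iota:\diag(H)\hookrightarrow H\otk H^{\cop}$ is a comodule algebra inclusion with quotient coalgebra $Q = (H\otk H^{\cop})/(H\otk H^{\cop})\diag(H)^+$, a version of the Schneider–Takeuchi equivalence identifies the category of left $H\otk H^{\cop}$-comodules that are right $\diag(H)$-modules with the category ${}^Q\Mo$, the equivalence being $V\mapsto (H\otk H^{\cop})\Box_Q V$ on one side and $P\mapsto P/P\cdot\diag(H)^+$ on the other. Applying this to the bimodule $P$ yields $V \in {}^Q\Mo$ with $P \simeq (H\otk H^{\cop})\Box_Q V$, giving \eqref{iso-cop1}.

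Finally, for \eqref{iso-cop2} I would compute the coinvariants on both sides of \eqref{iso-cop1}. Since the identification $S\Box_H K \simeq \End_{\diag(H)}(P)$ sends the $H\otk H^{\cop}$-coaction on $S\Box_H K$ to the coaction induced on $\End_{\diag(H)}(P)$ by the comodule structure of $P$, the coinvariants correspond to those $\diag(H)$-linear endomorphisms of $P$ that are simultaneously $H\otk H^{\cop}$-colinear. Under the equivalence $P \leftrightarrow V$ these are exactly the $Q$-comodule endomorphisms of $V$, that is $\End^Q(V)$. The main obstacle I expect is verifying cleanly that the isomorphism from the Schneider–Takeuchi-type equivalence is compatible with the $H\otk H^{\cop}$-comodule algebra structure on the endomorphism algebras; once this compatibility is in place both \eqref{iso-cop1} and \eqref{iso-cop2} follow directly.
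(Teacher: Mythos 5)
Your proposal is correct and follows essentially the same route as the paper: Theorem \ref{invert-comod} plus Lemma \ref{diagonal-comod} give ${}_{S\Box_H K}\Mo\simeq{}_{\diag(H)}\Mo$, the equivalence is implemented by an object $P\in{}^{H\otk H^{\cop}}\Mo_{\diag(H)}$ with $S\Box_H K\simeq\End_{\diag(H)}(P)$ as comodule algebras (the paper cites \cite[Lemma 1.26]{AM} for this), and the equivalence ${}^{H\otk H^{\cop}}\Mo_{\diag(H)}\simeq{}^Q\Mo$ identifies $P$ with $(H\otk H^{\cop})\Box_Q V$, with \eqref{iso-cop2} obtained exactly as you say by identifying coinvariant endomorphisms with colinear ones.
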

\pf By Theorem  \ref{invert-comod} the module categories ${}_{S\Box_H K}\Mo$,
$ {}_{ \diag(H)}\Mo$ are equivalent. It follows from \cite[Lemma 1.26]{AM}
 that there exists an 
object $P\in {}^{H\otk H^{\cop}}\Mo_{ \diag(H)}$ such that
$$ S\Box_H K\simeq \End_{ \diag(H)}(P).$$ 
The left $H\otk H^{\cop} $-comodule structure on $\End_ {\diag(H)}(P)$ is given by
$\lambda:\End_{\diag(H)}(P)\to H\otk H^{\cop}\otk \End_{\diag(H)}(P)$, $\lambda(T)=T\_{-1}\ot
T\_0$ where
\begin{equation}\label{h-comod} \langle\alpha, T\_{-1}\rangle\,
T_0(p)=\langle\alpha, T(p\_0)\_{-1}\Ss^{-1}(p\_{-1})\rangle\,
T(p\_0)\_0,\end{equation} for any $\alpha\in (H\otk H^{\cop})^*$,
$T\in\End_{\diag(H)}(P)$, $p\in P$.

There is an equivalence of categories ${}^{H\otk H^{\cop}}\Mo_{ \diag(H)}
\simeq {}^Q\Mo$. The
functors $\Psi: {}^{H\otk H^{\cop}}\Mo_{ \diag(H)}\to {}^Q\Mo,$ $ \Phi:{}^Q\Mo\to {}^{H\otk H^{\cop}}\Mo_{ \diag(H)}$
defined by 
$$\Psi(M)=M/ (H\otk H^{\cop}) \diag(H)^+, \quad \Phi(V)= (H\otk H^{\cop})\Box_Q V,$$
$M\in {}^{H\otk H^{\cop}}\Mo_{ \diag(H)}$, $V\in {}^Q\Mo$ give an equivalence of categories. 
The left $H\otk H^{\cop}$-comodule structure on  $(H\otk H^{\cop})\Box_Q V$, $\delta:(H\otk H^{\cop})\Box_Q V
\to H\otk H^{\cop} \otk (H\otk H^{\cop})\Box_Q V$ and the right $ \diag(H)$-action are given by
$$\delta(h\ot t\ot v)=h\_1\ot  t\_2\ot h\_1\ot t\_1\ot v, \quad (h\ot t\ot v)\cdot x= hx\_1\ot tx\_2 \ot v, $$
for all $x\in H, h\ot t\ot v\in (H\otk H^{\cop})\Box_Q V$. This proves
isomorphism \eqref{iso-cop1}. 
Isomorphism \eqref{iso-cop2} follows from
$\End_{\diag(H)}(P)^{\co H}= \End^H_{\diag(H)}(P)$. 
\epf

\begin{cor}\label{corol2} Assume $H$ is pointed. Let $K, S$ be left $H\otk H^{\cop}$-comodule algebras
as in Corollary \ref{corol1}. Assume also that 
\begin{equation}\label{hy0} (S\Box_H K)_0= S_0 \Box_{H_0} K_0.
\end{equation}
Then ${}_{S_0}\Mo$, ${}_{K_0}\Mo$ are invertible $\Rep(H_0)$-bimodule categories.
\end{cor}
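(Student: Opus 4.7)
The plan is to identify ${}_{K_0}\Mo$ as an inverse of ${}_{S_0}\Mo$ in the Brauer-Picard group of $\Rep(H_0)$ by establishing the equivalence
\[
{}_{S_0}\Mo \boxtimes_{\Rep(H_0)} {}_{K_0}\Mo \simeq \Rep(H_0)
\]
of $\Rep(H_0)$-bimodule categories; the reverse equivalence is obtained by symmetry. Starting from the hypotheses of Corollary \ref{corol1} (which already include ${}_S\Mo \boxtimes_{\Rep(H)} {}_K\Mo \simeq \Rep(H)$), I first extract the explicit isomorphism $S \Box_H K \simeq \End_{\diag(H)}(P)$ of $H \otk H^{\cop}$-comodule algebras, with $P = (H \otk H^{\cop}) \Box_Q V$ for some $V \in {}^Q\Mo$. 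In particular $S \Box_H K$ is Morita-equivalent, as an $H \otk H^{\cop}$-comodule algebra, to $\diag(H)$.

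The next step is to pass to coradicals. By hypothesis, $(S \Box_H K)_0 = S_0 \Box_{H_0} K_0$. On the other side, pointedness of $H$ ensures that $H_0 \otk H_0^{\cop}$ is a Hopf subalgebra of $H \otk H^{\cop}$ and that $Q$ admits a natural coradical $Q_0$ compatible with the quotient map. Setting $P_0 = (H_0 \otk H_0^{\cop}) \Box_{Q_0} V_0$, where $V_0$ is the coradical of $V$ as a $Q$-comodule, I would identify $\End_{\diag(H)}(P)_0 \simeq \End_{\diag(H_0)}(P_0)$ as $H_0 \otk H_0^{\cop}$-comodule algebras. Combined with the coradical hypothesis, this yields that $S_0 \Box_{H_0} K_0$ is Morita-equivalent as $H_0 \otk H_0^{\cop}$-comodule algebra to $\diag(H_0)$, and therefore ${}_{S_0 \Box_{H_0} K_0}\Mo \simeq \Rep(H_0)$ as $\Rep(H_0)$-bimodule categories.

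To conclude, Corollary \ref{tens-bimod} (which requires no invertibility hypothesis) gives ${}_{S_0}\Mo \boxtimes_{\Rep(H_0)} {}_{K_0}\Mo \simeq \Mo(H_0, H_0, K_0, \overline{S_0})$ as $\Rep(H_0)$-bimodule categories. The indecomposability of $\Mo(H_0, H_0, K_0, \overline{S_0})$ is now automatic from the preceding step, and I would transfer the freeness condition (i) and the exactness condition (ii) of Theorem \ref{invert-comod} from the $H$-level to the $H_0$-level using the pointedness of $H$ and the hypothesis $(S \Box_H K)_0 = S_0 \Box_{H_0} K_0$. Repeating the argument of Theorem \ref{invert-comod} verbatim in the $H_0$-setting, the functor $\Fc$ of \eqref{functor-f} realizes an equivalence $\Mo(H_0, H_0, K_0, \overline{S_0}) \simeq {}_{S_0 \Box_{H_0} K_0}\Mo \simeq \Rep(H_0)$. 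The hard part will be the second step: showing that Morita equivalence of $H \otk H^{\cop}$-comodule algebras with trivial coinvariants descends to Morita equivalence of their coradicals — concretely, the identification $\End_{\diag(H)}(P)_0 \simeq \End_{\diag(H_0)}(P_0)$. This is where the pointedness of $H$ enters in an essential way, ensuring that $H_0$ is a Hopf subalgebra and that the coradical filtration on $P$ interacts compatibly with both the $\diag(H)$-action and the coinvariants on $\End_{\diag(H)}(P)$.
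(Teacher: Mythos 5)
Your outline follows the same overall strategy as the paper (use Corollary \ref{corol1} to write $S\Box_H K\simeq \End_{\diag(H)}(P)$ with $P=(H\otk H^{\cop})\Box_Q V$, descend to coradicals, then conclude via the cotensor-product description of the tensor product), but the actual content of the paper's proof is exactly the step you defer. The paper does not compute the coradical of $\End_{\diag(H)}(P)$ directly: it first replaces $\End_{\diag(H)}(P)$ by $\Hom_{\ku}(V,(H\otk H^{\cop})\Box_Q V)$, an isomorphism of $H\otk H^{\cop}$-comodule algebras resting on Skryabin's theorem that $H\otk H^{\cop}\simeq Q\otk \diag(H)$ as right $\diag(H)$-modules and right $Q$-comodules, and only then takes coradicals, obtaining $(S\Box_H K)_0\simeq \Hom_{\ku}(V_0,\widetilde{P})$ with $\widetilde{P}=\{\sum h\ot v\in (H\otk H^{\cop})\Box_Q V: h\in H_0\otk H_0\}$, and finally $\widetilde{P}\simeq \diag(H_0)\otk V_0$, so that $\Hom_{\ku}(V_0,\widetilde{P})\simeq \End_{\diag(H_0)}(\widetilde{P})$. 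Your proposed identification $\End_{\diag(H)}(P)_0\simeq \End_{\diag(H_0)}(P_0)$ with $P_0=(H_0\otk H_0^{\cop})\Box_{Q_0}V_0$ is precisely the assertion that requires an argument: it is not clear a priori that the coradical of an endomorphism comodule algebra is again an endomorphism algebra, nor that your $P_0$ coincides with the paper's $\widetilde{P}$. Labelling this ``the hard part'' and appealing to pointedness does not supply a proof, so the proposal has a genuine gap at its center.

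A secondary issue: you claim that indecomposability of $\Mo(H_0,H_0,K_0,\overline{S_0})$ is ``automatic from the preceding step''. That step only identifies the source ${}_{S_0\Box_{H_0}K_0}\Mo$ of the functor $\Fc$ with $\Rep(H_0)$; what the argument of Theorem \ref{invert-comod} needs is indecomposability of the target, which in the theorem came from invertibility of the factors --- here that is the conclusion, not a hypothesis. Likewise, the transfer of conditions (i) and (ii) to the $H_0$-level is announced but not carried out. The paper itself is terse at this point (it simply invokes Theorem \ref{invert-comod} once $(S\Box_H K)_0=S_0\Box_{H_0}K_0$ is identified, up to Morita equivalence of comodule algebras, with $\diag(H_0)$), so your attention to these hypotheses is reasonable; but in your write-up they remain unverified claims rather than proofs.
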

\pf By Corollary \ref{corol1}  there exists an object $V\in {}^Q\Mo$
such that 
$$S\Box_H K\simeq \End_{\diag(H)}(H\otk H^{\cop}\Box_Q V)\simeq 
\Hom_{\ku}(V, H\otk H^{\cop}\Box_Q V).$$
Let us explain the second isomorphism. The space
$\Hom_{\ku}(V, H\otk H^{\cop}\Box_Q V)$ is a left $H\otk H^{\cop}$-comodule
via $T\mapsto T\_{-1}\ot T\_0$ such that for all $ \alpha\in (H\otk H^{\cop})^*$
$$ \langle\alpha, T\_{-1}\rangle\,
T_0(v)=\langle\alpha, T(v\_0)\_{-1} \Ss^{-1}(v\_{-1})\rangle\,
T(v\_0)\_0, $$
for all $v\in V$. Recall that we are identifying
$\diag(H)$ with the coideal subalgebra $\iota(\diag(H))\subseteq H\otk H^{\cop}.$
There is an isomorphism $H\otk H^{\cop}\simeq Q\otk \diag(H)$
of right $\diag(H)$-modules and right $Q$-comodules \cite[Thm. 6.1]{S}.

Define $\phi: \End_{\diag(H)}(H\otk H^{\cop}\Box_Q V)
\to \Hom_{\ku}(V, H\otk H^{\cop}\Box_Q V)$, $\psi:\Hom_{\ku}(V, H\otk H^{\cop}\Box_Q V)
\to  \End_{\diag(H)}(H\otk H^{\cop}\Box_Q V)$ by
$$\phi(T)(v)=T(1\ot v), \quad  \psi(U)(h\ot v)=(h\ot 1)\cdot U(v),$$
for all $v\in V$, $h\in \diag(H)$. One can readily prove that $\phi$ and $\psi$
are one the inverse of each other and they are comodule morphisms.
Thus, there are isomorphisms
$$(S\Box_H K)_0 \simeq \Hom_{\ku}(V, H\otk H^{\cop}\Box_Q V)_0\simeq
\Hom_{\ku}(V_0, \widetilde{P}),$$
where $\widetilde{P}=\{\sum h\ot v\in H\otk H^{\cop}\Box_Q V: h\in H_0\otk H_0\}.$
Since there is an isomorphism $\widetilde{P}\simeq \diag(H_0)\otk V_0$ then
$$\Hom_{\ku}(V_0, \widetilde{P})\simeq \End_{\diag(H_0)}( \widetilde{P}),$$
which implies that the bimodule categories ${}_{(S\Box_H K)_0}\Mo$,
${}_{\diag(H_0)}\Mo$ are equivalent. By hypothesis
\eqref{hy0} the bimodule categories  
${}_{S_0 \Box_{H_0} K_0}\Mo$,
${}_{\diag(H_0)}\Mo$ are equivalent. Using Theorem \ref{invert-comod}
we get that both categories 
${}_{S_0}\Mo$, ${}_{K_0}\Mo$ are invertible $\Rep(H_0)$-bimodule categories.

\epf

Let $H$ be a pointed Hopf algebra such that the coradical is the group
algebra of an Abelian group $G$. Corollary \ref{corol2} tells us that
to find invertible $\Rep(H)$-bimodule categories we have to look
at those comodule algebras $K$ such that $K_0=\ku_{\psi} F$ where
$F\subseteq G$ is a subgroup, $\psi\in Z^2(F, \ku^{\times})$ is a 2-cocycle
such that the Morita class of the pair $(F,\psi)$ belongs to the Brauer-Picard
group of $\Rep(\ku G)$ that has been computed in \cite{ENO}.

\begin{rmk} In general there is an inclusion $ (S\Box_H K)_0\supseteq S_0 \Box_{H_0} K_0$.
Equality is not true for arbitrary comodule algebras, however
 \eqref{hy0} seems to be fulfilled in many examples
of comodule algebras over pointed Hopf algebras such that the bimodule
 categories are invertible.
\end{rmk}

\subsection{The Brauer-Picard group of $\Rep(G)$} 
In this Section we  compare the product of the Brauer-Picard group
of the category of representations of a finite Abelian group $G$ obtained in \cite{ENO}
and the product \eqref{pro-cot-a}.

\medbreak

Let $G$ be a finite Abelian group. The group $O(G\oplus \widehat{G})$ consists of group
 isomorphisms $\alpha:G\oplus \widehat{G}\to G\oplus \widehat{G} $ 
such that $\langle\alpha_2(g,\chi), \alpha_1(g,\chi) \rangle = \langle\chi, g\rangle$
for all $g\in G, \chi\in \widehat{G}$. Here $\alpha(g,\chi)= (\alpha_1(g,\chi) , \alpha_2(g,\chi))$.

\begin{teo}\cite[Corollary 1.2]{ENO} There is an isomorphism of groups 
$$\text{BrPic}(\Rep(G))\simeq O(G\oplus \widehat{G}).$$
\end{teo}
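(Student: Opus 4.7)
The plan is to derive the isomorphism from the general identification $\brp(\Rep(G))\simeq \invm(\Ze(\Rep(G)))$ established earlier in this section, combined with the fusion rules of Proposition \ref{tensor-prod-modcat-group} and the tensor product description of Theorem \ref{invert-comod}, giving a proof alternative to the cohomological one in \cite{ENO}.

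First I would identify the Drinfeld center as a braided fusion category. For a finite abelian group $G$ the Drinfeld double $D(\ku G)$ is isomorphic to the group algebra $\ku[G\oplus \widehat{G}]$, with R-matrix determined by the canonical evaluation pairing $G\times \widehat{G}\to \ku^{\times}$; consequently $\Ze(\Rep(G))\simeq \Rep(\ku[A])$ where $A:=G\oplus \widehat{G}$ is endowed with the hyperbolic quadratic form $q$ coming from the evaluation pairing. The problem therefore reduces to computing $\invm(\Rep(A))$.

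Next, I would classify invertible $\Rep(A)$-module categories. Any indecomposable exact module category over $\Rep(A)$ has the form ${}_{\ku_{\psi}F}\Mo$ for a subgroup $F\subseteq A$ and $\psi\in Z^2(F,\ku^{\times})$. Applying Lemma \ref{comod-opos} and the fusion rule \eqref{tens-prod-g} to $({}_{\ku_{\psi}F}\Mo)^{\op}\boxtimes_{\Rep(A)}{}_{\ku_{\psi}F}\Mo$, the condition that this product is equivalent to the regular bimodule $\Rep(A)$ forces each double-coset summand ${}_{\ku_{\psi_s}F_s}\Mo$ to reduce to a copy of $\Vect$; combinatorially this means $F$ is Lagrangian with respect to $q$ (in particular $|F|=|G|$) and $\psi$ trivializes on $F$ in a way compatible with $q|_F$. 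Such pairs $(F,\psi)$ modulo equivalence therefore parametrize $\invm(\Rep(A))$.

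Finally, to each invertible bimodule category I would associate an element of $O(A)=O(G\oplus \widehat{G})$ by reading off how its Lagrangian data sits in $A=G\oplus \widehat{G}$. The graph-of-$\alpha$ construction sends $\alpha\in O(A)$ to the Lagrangian $F_\alpha=\{(\alpha_1(g,\chi),\alpha_2(g,\chi)):(g,\chi)\in A\}$, and the orthogonality condition $\langle\alpha_2(g,\chi),\alpha_1(g,\chi)\rangle=\langle\chi,g\rangle$ is exactly the condition that $F_\alpha$ be isotropic of half dimension with respect to the hyperbolic form $q$. Theorem \ref{invert-comod} then computes the tensor product of two invertible bimodules as a cotensor product of comodule algebras, and translating this at the level of Lagrangian subspaces yields the composition of orthogonal automorphisms, giving the group isomorphism.

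The main obstacle will be the last matching step: tracking how the pair $(F,\psi)$ transforms under the cotensor product $\ku_{\psi_1}F_1\,\Box_{\ku A}\,\ku_{\psi_2}F_2$ and identifying the resulting comodule algebra, up to Morita equivalence, with $\ku_{\psi}F_\alpha$ for $\alpha$ the composition of the orthogonal automorphisms attached to $(F_i,\psi_i)$. The subtle part is the cocycle bookkeeping: ensuring that the $\psi_s$ data arising in \eqref{tens-prod-g} trivializes on the Lagrangian nose and that the residual cocycle combines multiplicatively in $O(A)$. Once this is verified the bijection is shown to be a group homomorphism and the theorem follows.
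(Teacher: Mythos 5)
The paper does not prove this statement: it is quoted verbatim from \cite[Corollary 1.2]{ENO} (note the \textit{qed} immediately after the statement) and is used only as an input for the consistency check in the final proposition of Section \ref{bpg}. So there is no internal proof to compare with; what you propose would be a genuinely new argument, and it has a real gap at its central step. The reduction $\brp(\Rep(G))\simeq \invm(\Ze(\Rep(G)))$ and the identification $\Ze(\Rep(G))\simeq \Rep(\ku[G\oplus\widehat G])$ with the hyperbolic form are fine, but the classification step --- ``invertibility forces $F$ Lagrangian and $\psi$ compatible with $q$'' --- does not follow from the tools you invoke. Proposition \ref{tensor-prod-modcat-group} and \eqref{tens-prod-g} are proved only for $\ku G$ with the \emph{trivial} $R$-matrix $1\ot 1$; the bimodule structure implicit there is the one coming from the symmetric braiding. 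Invertibility in $\invm(\Ze(\Rep(G)))$ is invertibility with respect to the nontrivial hyperbolic braiding of $\Rep(\ku[A])$, $A=G\oplus\widehat G$, and it is precisely through the right action twisted by that braiding that the quadratic form enters. If you apply \eqref{tens-prod-g} verbatim with $F_1=F_2=F\subseteq A$ abelian, the double cosets are ordinary cosets, $F_s=F$, and $\psi_s$ is trivial (conjugation by a central $s$ cancels the cocycles), so $\Mo^{\op}\boxtimes_{\Rep(\ku A)}\Mo\simeq\bigoplus_{A/F}\Rep(F)$: the summands are not $\Vect$, and counting indecomposable summands would force $F=A$, not $|F|=|G|$. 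The underlying abelian category cannot detect invertibility at all (both sides are semisimple with $|A|$ simples for any $F$ with the relevant cocycles); only the braided module structure does, and the quoted fusion rule does not compute it. Establishing the Lagrangian classification therefore needs an analogue of Proposition \ref{tensor-prod-modcat-group} for a nontrivial $R$-matrix (e.g.\ via $\Mo(R,K,S)$ and Theorem \ref{fun-equiv}), and doing so is essentially the content of ENO's Corollary 1.2 itself --- so as written the argument is circular at its key point.

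Second, the step you yourself flag as ``the main obstacle'' --- that the bijection with $O(G\oplus\widehat G)$ is a group homomorphism --- is not an afterthought: it requires Theorem \ref{invert-comod}, hence verifying hypotheses (i) (freeness of $S\otk K$ over $S\Box_H K$) and (ii) (exactness) for twisted group algebras, plus the cocycle bookkeeping identifying $\ku_{\psi_\alpha}L_\alpha\,\Box_{\ku G}\,\ku_{\psi_\beta}L_\beta$ with $\ku_{\psi_{\alpha\beta}}L_{\alpha\beta}$. The paper only performs this as a consistency check \emph{assuming} ENO's theorem (last proposition of Section \ref{bpg}); it never derives the isomorphism from it. Also note a small mismatch: in the paper $L_\alpha=\{(\alpha_1(g,\chi),g)\}\subseteq G\times G$ with the cocycle $\psi_\alpha$, not the graph $\{(\alpha_1(g,\chi),\alpha_2(g,\chi))\}\subseteq G\oplus\widehat G$, so your dictionary between Lagrangian data and the pairs $(L_\alpha,\psi_\alpha)$ would also need to be spelled out rather than read off.
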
\qed

Let $\alpha\in O(G\oplus \widehat{G})$ and define $U_\alpha\subseteq G\times G$ the subgroup
$$ L_\alpha=\{(\alpha_1(g,\chi) ,g): g\in G, \chi\in \widehat{G}\}.$$
and the 2-cocycle $\psi_\alpha:L_\alpha\times L_\alpha\to \ku^{\times}$ defined by
$$ \psi_\alpha((\alpha_1(g,\chi) ,g), (\alpha_1(h,\xi) ,h))=\langle\alpha_2(g,\chi)^{-1} , 
\alpha_1(h,\xi)\rangle \langle \chi,h\rangle.$$
It was proved in \cite{ENO} that the bimodule categories 
${}_{\ku_{\psi_\alpha}L_\alpha}\Mo$ are invertible.
\begin{prop} There is an equivalence of $\Rep(\ku G)$-bimodule categories
$${}_{\ku_{\psi_\alpha}L_\alpha \Box_{\ku G}\ku_{\psi_\beta}L_\beta}  \Mo
\simeq {}_{\ku_{\psi_{\alpha \beta}}L_{\alpha \beta}}\Mo.$$
\end{prop}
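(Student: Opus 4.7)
The plan is to derive this equivalence by combining two ingredients: Theorem \ref{invert-comod}, which rewrites a tensor product of two invertible $\Rep(H)$-bimodule categories as the module category of a cotensor-product algebra, together with the isomorphism $\brp(\Rep(\ku G)) \simeq O(G\oplus \widehat G)$ of \cite[Cor.\ 1.2]{ENO}, which says the tensor product of invertible bimodule categories corresponds to composition in $O(G\oplus \widehat G)$. Specifically, I would apply Theorem \ref{invert-comod} with $H=\ku G$, $S = \ku_{\psi_\alpha}L_\alpha$ and $K = \ku_{\psi_\beta}L_\beta$ to produce an equivalence
$$
{}_{\ku_{\psi_\alpha}L_\alpha}\Mo \boxtimes_{\Rep(\ku G)} {}_{\ku_{\psi_\beta}L_\beta}\Mo \;\simeq\; {}_{\ku_{\psi_\alpha}L_\alpha \Box_{\ku G} \ku_{\psi_\beta}L_\beta}\Mo,
$$
and then identify the right-hand side with ${}_{\ku_{\psi_{\alpha\beta}}L_{\alpha\beta}}\Mo$ via the group law on $\brp(\Rep(\ku G))$.

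The first task is to check the three hypotheses of Theorem \ref{invert-comod} in this situation. Hypothesis (iii) is exactly the content of the result of \cite{ENO} recalled just before the proposition: both ${}_{\ku_{\psi_\alpha}L_\alpha}\Mo$ and ${}_{\ku_{\psi_\beta}L_\beta}\Mo$ are invertible $\Rep(\ku G)$-bimodule categories. Hypothesis (ii) is automatic, since $\ku G$ is semisimple and hence every module category over $\Rep(\ku G)$ is exact, in particular ${}_{S\Box_{\ku G} K}\Mo$. Hypothesis (i), the freeness of $S\otk K$ as a left $S\Box_{\ku G} K$-module, is a direct basis-level check: both $S$ and $K$ have the explicit bases $\{e_{(a,b)}\}_{(a,b)\in L_\alpha}$ and $\{e_{(c,d)}\}_{(c,d)\in L_\beta}$, and the $\ku G$-coaction via $\pi_B$ (cf.\ Lemma \ref{cotensorp}) grades $\bar S$ and $K$ by the second $G$-coordinate, so that $S\Box_{\ku G} K$ is spanned by those $e_{(a,b)}\ot e_{(c,d)}$ with $d=b^{-1}$, and the ambient $S\otk K$ decomposes as a free module over it by choosing coset representatives.

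Once the hypotheses are in place, Theorem \ref{invert-comod} yields the cotensor-product description displayed above. The tensor product of two invertible bimodule categories is again invertible, so ${}_{\ku_{\psi_\alpha}L_\alpha \Box_{\ku G} \ku_{\psi_\beta}L_\beta}\Mo$ represents an element of $\brp(\Rep(\ku G))$. Under the ENO isomorphism, the classes of ${}_{\ku_{\psi_\alpha}L_\alpha}\Mo$ and ${}_{\ku_{\psi_\beta}L_\beta}\Mo$ correspond to $\alpha$ and $\beta$ respectively, and the monoidal product in $\brp$ transports to composition in $O(G\oplus \widehat G)$; hence the tensor product represents $\alpha\beta$, which in turn is represented by ${}_{\ku_{\psi_{\alpha\beta}}L_{\alpha\beta}}\Mo$. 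Two invertible $\Rep(\ku G)$-bimodule categories are equivalent iff they define the same class in the Brauer--Picard group, so we obtain the claimed equivalence.

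The main obstacle is the verification of hypothesis (i): the subgroups $L_\alpha, L_\beta\subseteq G\times G$ and their twisted group algebras must be analyzed with care, because the freeness of $S\otk K$ over $S\Box_{\ku G} K$ depends on how the second-coordinate projections of $L_\alpha$ and $L_\beta$ sit inside $G$. Beyond this combinatorial point, the rest of the argument is essentially formal: it amounts to quoting Theorem \ref{invert-comod} and the classification of \cite{ENO}, with no further explicit manipulation of the cocycles $\psi_\alpha, \psi_\beta, \psi_{\alpha\beta}$ required.
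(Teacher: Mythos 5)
Your proposal is correct and follows essentially the same route as the paper: the paper's proof is precisely to quote Theorem \ref{invert-comod} for $S=\ku_{\psi_\alpha}L_\alpha$, $K=\ku_{\psi_\beta}L_\beta$ and to identify the resulting class with $\alpha\beta$ via the group isomorphism $\brp(\Rep(\ku G))\simeq O(G\oplus \widehat{G})$ of \cite{ENO}. The only difference is that you spell out the verification of hypotheses (i)--(iii) (semisimplicity for exactness, the graded/basis argument for freeness of $S\otk K$ over the cotensor product), which the paper leaves implicit.
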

\pf It follows directly from Theorem \ref{invert-comod}.\epf

\begin{rmk} The product in $\text{BrPic}(\Rep(G))$ for a non-Abelian
group $G$ remains as an open problem. As pointed out by the referee to describe the elements and the product 
 in $\text{BrPic}(\Rep(G))$ one might have to use the description given
in \cite[Corollary 3.6.3]{Da}. 
\end{rmk}

\subsection*{Acknowledgment} This work was written in part during a research fe\-llowship granted by
CONICET, Argentina in the University of Hamburg, Germany. 
The author wants to thank the entire staff of Hamburg university and specially to
professor Christoph Schweigert,  Astrid D\"orh\"ofer and  Eva Kuhlmann for the warm hospitality.
Thanks are due to the referee for his careful reading and for pointing errors in a previous version of this
work.

\bibliographystyle{amsalpha}

\begin{thebibliography}{AE}



\bibitem{AM}  {\sc N. Andruskiewitsch } and {\sc M. Mombelli}.
{\it On module categories over finite-dimensional Hopf algebras}.
J. Algebra \textbf{314} (2007), 383--418.



\bibitem{CCMT} {\sc S. Caenepeel, S. Crivei, A. Marcus} and {\sc M. Takeuchi}.
\emph{ Morita equivalences induced by bimodules over Hopf-Galois extensions}. J. Algebra \textbf{314} (2007), 267--30


\bibitem{Da} {\sc A. Davydov}. \emph{Modular invariants for group-theoretical categories I}. J.
Algebra,  \textbf{323} (2010),  1321--1348.


\bibitem{De}  {\sc P. Deligne} \emph{Cat\`{e}gories tannakiennes}.  The Grothendieck 
Festschrift, Vol. II,
Progr. Math., 87, Birkh\"auser, Boston, MA, 1990, 111--195.

\bibitem{D} {\sc Y. Doi}. \emph{Unifying Hopf modules}. J. Algebra \textbf{153} (1992), 373--385.

\bibitem{ENO}  {\sc P. Etingof}, {\sc D. Nikshych} and {\sc V. Ostrik}. \emph{Fusion categories and
 homotopy theory}. Quantum Topol. \textbf{1}, No. 3, (2010) 209--273.

\bibitem{eo} {\sc P. Etingof} and {\sc V. Ostrik}.
\emph{Finite tensor categories}. Mosc. Math. J. \textbf{4} (2004),
no. 3, 627--654.


\bibitem{Gr}  \textsc{J. Greenough}. \emph{Monoidal 2-structure of Bimodule Categories}.  J. Algebra \textbf{324} (2010) 1818--1859.


\bibitem{GNN}  \textsc{S. Gelaki}, \textsc{D. Naidu} and \textsc{ D. Nikshych}.
\emph{ Centers of graded fusion categories}.  Algebra  Number Theory \textbf{3} (2009), no. 8, 959--990.


\bibitem{KK} {\sc A. Kitaev} and  {\sc L. Kong}. \emph{Models for gapped boundaries and domain walls}. 
Preprint \texttt{arxiv:1106.3276.}

\bibitem{Mac} {\sc  S. MacLane}. \emph{Categories for the working mathematician}.
Graduate Texts in Mathematics, Vol. 5, Springer-Verlag, Berlin-Heidelberg-New York, 1971; 2nd ed.,
1998.


\bibitem{O1} {\sc V. Ostrik}. {\it Module categories, Weak Hopf
Algebras and Modular invariants}. Transform. Groups, 2 {\bf 8},
177--206 (2003).

\bibitem{S}  {\sc S. Skryabin},  \emph{ Projectivity and
freeness over comodule algebras},  Trans. Am. Math. Soc.
\textbf{359}, No. 6, 2597-2623 (2007).

\end{thebibliography}

\end{document}